\documentclass[a4paper,11pt]{article}
\usepackage[T1]{fontenc}
\usepackage{lmodern,amsmath,amsthm,amsfonts,amssymb,graphicx,float,microtype,thmtools,underscore,mathtools,xurl,thm-restate}
\usepackage[dvipsnames,svgnames,table]{xcolor}
\usepackage[labelsep = period, justification = centering]{caption}
\usepackage[shortlabels]{enumitem}
\setlist[itemize]{topsep=0ex,itemsep=0ex,parsep=0ex}
\setlist[enumerate]{topsep=0ex,itemsep=0ex,parsep=0ex}
\usepackage[unicode=true]{hyperref}
\hypersetup{
colorlinks,
breaklinks=true,
linkcolor={blue!60!black},
citecolor={black},
urlcolor={blue!60!black},
pdftitle={The Dominating 4-Colour Theorem}}
\usepackage[capitalise, compress, nameinlink, noabbrev]{cleveref}
\crefname{lem}{Lemma}{Lemmas}
\crefname{thm}{Theorem}{Theorems}
\crefname{cor}{Corollary}{Corollaries}
\newcommand{\defn}[1]{\textcolor{Maroon}{\emph{#1}}}
\newcommand{\mathdefn}[1]{\textcolor{Maroon}{#1}}
\usepackage[longnamesfirst,numbers,sort&compress]{natbib}
\makeatletter
\def\NAT@spacechar{~}
\makeatother
\usepackage[tmargin=30mm,bmargin=30mm,lmargin=30mm,rmargin=30mm]{geometry}
\renewcommand{\baselinestretch}{1.1}
\allowdisplaybreaks
\DeclarePairedDelimiter{\abs}{\lvert}{\rvert}
\DeclarePairedDelimiter{\set}{\lbrace}{\rbrace} 
\renewcommand{\epsilon}{\varepsilon}
\renewcommand{\emptyset}{\varnothing}
\renewcommand{\ge}{\geqslant}
\renewcommand{\le}{\leqslant}
\renewcommand{\geq}{\geqslant}
\renewcommand{\leq}{\leqslant}
\DeclareMathOperator{\ind}{index}

\newcommand{\CC}{\mathcal{C}}

\newcommand{\TT}{\mathcal{T}}
\renewcommand{\thefootnote}{\fnsymbol{footnote}}
\usepackage{amsthm,thmtools,thm-restate}

\declaretheoremstyle[
spaceabove = .2\baselineskip\@plus.2\baselineskip\@minus.2\baselineskip, 
spacebelow = .5\baselineskip\@plus.2\baselineskip\@minus.2\baselineskip,
headfont = \normalfont\itshape,
notefont = \mdseries, 
notebraces = {}{},
bodyfont = \normalfont,
postheadspace = .5em,
headpunct = .,
qed = \qedsymbol
]{proofstyle}

\declaretheorem[name = Theorem, numberwithin = section, style = plain]{thm}
\declaretheorem[name = Claim, numbered = no, style = plain]{claim}
\declaretheorem[name = Corollary, numberlike = thm, style = plain]{cor}

\declaretheorem[name = Lemma, numberlike = thm, style = plain]{lem}

\declaretheorem[name = Proof of claim, numbered = no, style = proofstyle, refname = {Proof,Proofs}, Refname = {Proof,Proofs}]{proofclaim}
\crefname{thm}{Theorem}{Theorems}
\crefname{lem}{Lemma}{Lemmas}
\crefname{obs}{Observation}{Observations}
\crefname{conj}{Conjecture}{Conjectures}
\crefname{claim}{Claim}{Claims}
\crefname{prob}{Problem}{Problems}
\crefname{prop}{Proposition}{Propositions}
\crefname{cor}{Corollary}{Corollaries}
\crefname{rmk}{Remark}{Remarks}
\begin{document}
\title{\bf\boldmath\fontsize{18pt}{20pt}\selectfont The Dominating 4-Colour Theorem}

\author{
\qquad Ant\'{o}nio Gir\~{a}o\,\footnotemark[1] 
\qquad Freddie Illingworth\,\footnotemark[1] 
\qquad Bojan Mohar\,\footnotemark[2]
\\
Sergey Norin\,\footnotemark[3] 
\qquad Raphael Steiner\,\footnotemark[4]
\qquad  Youri Tamitegama\,\footnotemark[5]
\\
Jane Tan\,\footnotemark[6]
\qquad David~R.~Wood\,\footnotemark[7]
\qquad Jung Hon Yip\,\footnotemark[7]
}

\maketitle

\begin{abstract}
A \defn{dominating $K_t$-model} in a graph $G$ is a sequence $(T_1,\dots,T_t)$ of pairwise vertex-disjoint connected subgraphs of $G$, such that whenever $1\leq i<j\leq t$ every vertex in $T_j$ has a neighbour in $T_i$. Replacing ``every vertex in $T_j$'' by ``some vertex in $T_j$'' retrieves the standard definition of $K_t$-model, which is equivalent to a $K_t$-minor in~$G$. We prove that every graph with no dominating $K_5$-model is $4$-colourable. This generalises and is significantly stronger than the 4-colour theorem for planar graphs or for graphs with no $K_5$-minor. It also makes progress towards Haj\'{o}s' conjecture on $K_5$-subdivisions in $5$-chromatic graphs. 
\end{abstract}

\footnotetext[1]{Department of Mathematics, University College London, UK (\textsf{\{\href{mailto:a.girao@ucl.ac.uk}{a.girao},\allowbreak\href{mailto:f.illingworth@ucl.ac.uk}{f.illingworth}\}\allowbreak @ucl.ac.uk}). Research of Illingworth supported by a Robert Bartnik visiting fellowship.}

\footnotetext[2]{Department of Mathematics, Simon Fraser University, Burnaby, BC, Canada (\textsf{\href{mailto:mohar@sfu.ca}{mohar@sfu.ca}}) and FMF, Department of Mathematics, University of Ljubljana, Ljubljana, Slovenia. Research supported by the NSERC Discovery Grant R832714 (Canada), by the ERC Synergy grant (European Union, ERC, KARST, project number 101071836), and by the Core Research Program P1-0297 of ARIS (Slovenia).}

\footnotetext[3]{Department of Mathematics and Statistics, McGill University, Montr\'eal, Canada (\textsf{\href{mailto:sergey.norin@mcgill.ca}{sergey.norin\allowbreak@mcgill.ca}}).  Research supported by NSERC.}

\footnotetext[4]{Department of Mathematics, ETH Zürich, Switzerland (\textsf{\href{mailto:raphaelmario.steiner@math.ethz.ch}{raphaelmario.steiner@math.ethz.ch}}). Research supported by the SNSF Ambizione Grant No. 216071.}

\footnotetext[5]{Mathematical Institute, University of Oxford, UK 
(\textsf{\href{mailto:youri.tamitegama@keble.ox.ac.uk}{youri.tamitegama@keble.ox.ac.uk}}).}

\footnotetext[6]{All Souls College and Mathematical Institute, University of Oxford, UK (\textsf{\href{mailto:jane.tan@all-souls.ox.ac.uk}{jane.tan@all-souls.ox.ac.uk}}).}
 
\footnotetext[7]{School of Mathematics, Monash University, Melbourne, Australia (\textsf{\{\href{mailto:junghon.yip@monash.edu}{junghon.yip},\href{mailto:david.wood@monash.edu}{david.wood}\}\allowbreak@monash\allowbreak.edu}). Research of Wood is supported by the Australian Research Council and by NSERC. }

\renewcommand{\thefootnote}{\arabic{footnote}}
\section{Introduction}

The 4-colour theorem states that every planar graph is 4-colourable~\citep{AH89,RSST97}. By the Kuratowski--Wagner theorem~\citep{Kuratowski30,Wagner37}, the 4-colour theorem is equivalent to saying that every $K_5$-minor-free and $K_{3,3}$-minor-free graph is 4-colourable. In fact, using this and Wagner's characterisation of $K_5$-minor-free graphs, it is straightforward to show that every $K_5$-minor-free graph is 4-colourable.

We prove a substantial generalisation of the latter result via the following definition introduced by \citet{IW25}. A \defn{dominating $K_t$-model} in a graph $G$ is a sequence $(T_1,\dots,T_t)$ of pairwise disjoint connected subgraphs of $G$, such that whenever $1\leq i<j\leq t$ every vertex in $T_j$ has a neighbour in $T_i$. That is, each $T_i$ dominates $T_{i+1} \cup \dots \cup T_t$. Replacing ``every vertex in $T_j$'' by ``some vertex in $T_j$'' retrieves the standard definition of $K_t$-model, which is equivalent to a $K_t$-minor in $G$. Our main result is the following.

\begin{thm}
\label{thm:d4ct}
Every graph with no dominating $K_5$-model is $4$-colourable. 
\end{thm}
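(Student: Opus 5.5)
We argue by minimal counterexample. Let $G$ be a graph with no dominating $K_5$-model that is not $4$-colourable, chosen with $|V(G)|$ minimum. First note that having no dominating $K_5$-model is preserved under taking induced subgraphs. It is not obviously preserved under contracting edges, so the reductions have to be chosen with care. By minimality, every proper induced subgraph of $G$ is $4$-colourable, so $G$ is $5$-critical. In particular $G$ is $2$-connected, has minimum degree at least $4$, and has no clique cutset. The overall strategy is to build a dominating $K_5$-model greedily, exploiting criticality, and to show that the greedy process can only fail when $G$ has structure that allows a $4$-colouring to be extended.

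The key construction is as follows. Pick a vertex $v$ and set $T_1=\{v\}$. Then $T_2,\dots,T_5$ must lie in $N(v)$, and we need a dominating $K_4$-model inside $G[N(v)]$. That requirement is far too strong, so $T_1$ must be allowed to grow. The more flexible approach is to take $T_1$ to be a connected set chosen so that the remaining graph $G' = G[N(T_1)]$ (or a suitable subset of the vertices dominated by $T_1$) is still ``rich''. We then recurse. A dominating $K_4$-model in the graph induced by the vertices dominated by $T_1$, with all parts avoiding $T_1$, completes the model. This suggests proving a stronger, inductive statement of the form: \emph{if $H$ has no dominating $K_t$-model then $H$ is $(t-1)$-colourable}, for $t\le 5$. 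For $t\le 4$ this is easy to verify directly: a dominating $K_t$-model for $t\le 3$ is found in any graph containing an odd cycle or a triangle-type structure, and for $t=4$ the argument parallels Dirac's proof via $K_4$-minors. The case $t=5$ then comes from a reduction. We take a $5$-critical $G$ and choose $T_1$ as a maximal connected set such that the set $D$ of vertices dominated by $T_1$ (outside $T_1$) induces a subgraph that is not $3$-colourable. If $G[D]$ were $3$-colourable for every connected $T_1$, we would try to combine a $3$-colouring of the dominated part with the colour class on $T_1$ to produce a $4$-colouring of $G$, giving a contradiction.

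The main obstacle is exactly this step: starting from ``$G$ is not $4$-colourable'', we must produce a connected $T_1$ whose strictly dominated neighbourhood is not $3$-colourable. Plain criticality does not immediately give this, since vertices of $N(T_1)$ with only some neighbours in $T_1$ escape. My first attempt would use Kempe-chain arguments in a $4$-colouring of $G-v$. For a vertex $v$ of degree $4$ in a planar-like situation, the Kempe chains between non-adjacent neighbours give connected bicoloured subgraphs. These chains are natural candidates for $T_i$, since a Kempe chain of colours $\{a,b\}$ that is maximal dominates every vertex of colours $a,b$ adjacent to it. I would try to show that a maximal $\{1,2\}$-chain $C$ containing a neighbour of $v$, together with $v$, forms $T_1$, and that the vertices of colours $3,4$ adjacent to $C$ are handled by a further Kempe-chain induction. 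Where this fails, I would fall back on a structural decomposition (low-order separations, as in Wagner's theorem) to glue colourings across small cutsets. The separations must be shown to be cliques or reducible, using the absence of dominating $K_5$-models on each side with the cutset made complete.
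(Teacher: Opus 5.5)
There is a genuine gap. Your plan never actually produces the model, and the one concrete reduction you state is not a reduction. Your claim is that every $5$-chromatic graph has a connected $T_1$ with $G[N(T_1)]$ not $3$-colourable. That claim is itself a strengthening of the theorem: combined with the $t=4$ case it implies the theorem, and its analogue for all $t$ would imply Hadwiger's conjecture. Your suggested proof of it fails. A connected $T_1$ with more than one vertex is not independent, so it cannot be a colour class, and the vertices outside $T_1\cup D$ are never coloured. More fundamentally, the theorem implies the Four Colour Theorem, since a planar graph has no $K_5$-minor and hence no dominating $K_5$-model. So an argument built only from criticality, greedy choices and Kempe chains in a $4$-colouring of $G-v$ would be a Kempe-style proof of the 4CT. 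No such proof is known; this is exactly where Kempe's own argument broke down. Your plan has no point at which planarity is detected and the 4CT is invoked. In the paper this happens once $G$ is $4$-connected (\cref{lem:4-connected-triangle}). If $G-V(L)$ contains a triangle, results of Robertson, Seymour and Thomas give either that $G$ is planar, or a legless tripod on the triangle. In the latter case a lean triad $T$ yields the model $(G-V(T),\,T-\{v_3,v_4,v_5\},\,v_3,v_4,v_5)$.

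The second missing idea is what makes contractions safe. You flag this problem but do not solve it, and your ``stronger'' statement for all $t\le 5$ does not help. Minimality among graphs with no dominating $K_5$-model says nothing about $G[B]$ with the separator made complete. That graph is not a subgraph of $G$ and may well contain such a model. The paper instead proves the following: for every ordered clique $L=(v_1,v_2)$, $G$ is $4$-colourable or has an $L$-compatible model. Here $v_1$ lies in $T_1$ or is unused, $v_2$ lies in $T_1\cup T_2$ or is unused, and $v_2\in T_2$ forces $v_1\in T_1$. With this, contracting a connected subgraph containing $v_1$, or one inside $N(v_1)$ containing $v_2$, cannot create a compatible model (\cref{lem:contraction}). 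The new vertex lies in $T_1$, or in $T_2$ adjacent to $v_1\in T_1$, so it can be uncontracted. This lets the paper eliminate $2$- and $3$-separations. It realises a clique on $A\cap B$ as a contraction of $G[A]$ through $v_1$, using a rerouting argument for $3$-separations. It then glues colourings via \cref{lem:colouring}, which needs colourings of every $G''/E'$ with $E'\subseteq E(K)$, not just of the completed separator. Only in the final configuration ($4$-connected, with $G-V(L)$ triangle-free) does your idea $T_1=\{v_1\}$ work. Either \cref{lem:dominating-K4} gives $(T_2,\dots,T_5)$ inside $N(v_1)$, or some $u\in N(v_1)\setminus\{v_2\}$ has at most two neighbours in $N(v_1)$. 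In the second case, contract $uv_1$ together with the edges from $u$ to its neighbours outside $N(v_1)\cup\{v_1\}$; these neighbours are independent by triangle-freeness. The resulting $4$-colouring extends to $u$.
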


The proof of \cref{thm:d4ct} (which employs the 4-colour theorem for planar graphs) is presented in \cref{Proof}. Before that we make several comments about \cref{thm:d4ct}. 

First, note that \cref{thm:d4ct} is equivalent to saying that every 5-chromatic  graph has a dominating $K_5$-model $(T_1, \dots, T_5)$. If $v$ is any vertex in $T_5$ and $w$ is any neighbour of $v$ in $T_4$, then $(T_1,T_2,T_3,\set{v},\set{w})$ is a dominating $K_5$-model \citep{IW25}. Thus there is a dominating $K_5$-model with two singleton branch-sets. This strengthens a result of \citet[Cor.~1.4]{MS24} who showed that every 5-chromatic graph contains a (not necessarily dominating) $K_5$-model with at least one singleton branch-set. Two singleton branch-sets is optimal here, since there are 5-chromatic graphs with arbitrarily large girth~\citep{Erdos59}, and no complete graph model in any such graph has three branch-sets of bounded size. One final remark on the structure of the branch-sets: we may further take $T_3$ to be a path and $G[V(T_3) \cup V(T_4) \cup V(T_5)]$ to be an induced cycle, by reducing $T_3$ to a shortest path from a neighbour of $v$ to a neighbour of $w$.

The class of graphs with no dominating $K_5$-model is significantly more general than the class of graphs with no $K_5$-minor. For example, the 1-subdivision of $K_n$ has no dominating $K_4$-model~\citep{IW25}, but obviously contains $K_n$ as a minor. Moreover, no graph with maximum degree $3$, including random cubic graphs and cubic expanders, has a dominating $K_5$-model. In fact, every graph, in which no two vertices with degree at least $4$ are adjacent, has no dominating $K_5$-model (since if $(T_1,T_2,T_3,\set{v},\set{w})$ is a dominating $K_5$-model, then $v$ and $w$ are adjacent vertices both with degree at least $4$). There are even $4$-connected graphs that contain a $K_5$-minor but contain no dominating $K_5$-model. For example, let $G$ be obtained from the complete bipartite graph $K_{5,5}$ by removing the edges of a perfect matching. Note that $G$ has a perfect matching whose contraction gives a $K_5$-minor. However, it does not contain a dominating $K_5$-model\footnote{Let $\set{A, B}$ be the bipartition of the $K_{5, 5}$ and $M$ be the matching removed. Suppose towards a contradiction that $G$ has a dominating $K_5$-model $(T_1,\dots,T_5)$. From the above discussion, we may assume that $T_4 = \set{p}$ and $T_5 = \set{q}$ where $p \in A$ and $q \in B$. Since $G$ is triangle-free, each of $T_1,T_2,T_3$ has at least two vertices. Suppose that $\abs{T_i} = \abs{T_j} = 2$ for distinct $i,j\in\set{1, 2, 3}$. Let $T_i=ab$ and $T_j=cd$ with $a,c\in A$ and $b,d\in B$. Then $\set{a,b,p},\set{c,d,q}$ is a bipartition of $K_{3,3}$, which does not exist in $G$. So at most one of $T_1,T_2,T_3$ has exactly two vertices. Since $\abs{V(G)} = 10$, two of $T_1,T_2,T_3$ have exactly three vertices, and the other has exactly two vertices. In particular, every vertex of $G$ is used by $T_1, \dots, T_5$ and $T_1$ is a path with two or three vertices. Let $v \in V(T_1)$ be the only vertex of $T_1$ in its side of the bipartition. Then the vertex matched with $v$ in $M$ is not in $T_1$ and not dominated by $T_1$, which is the required contradiction.}.

Finally, we place  \cref{thm:d4ct} in the context of some more general conjectures. \citet{Hadwiger43} famously conjectured that every graph with no $K_t$-minor is $(t-1)$-colourable. This conjecture is straightforward for $t\leq 4$ \citep{Hadwiger43,Dirac52}. It is true for $t=5$ as mentioned above, it is true for $t=6$ as proved in \citet{RST-Comb93}, and it is open for $t\geq 7$. The dominating Hadwiger conjecture, stated by \citet{IW25}, asserts that every graph with no dominating $K_t$-model is $(t-1)$-colourable. This conjecture implies and is significantly stronger than Hadwiger's conjecture. \citet{IW25} proved the dominating Hadwiger conjecture for $t\leq 4$. \cref{thm:d4ct} proves it for $t=5$.

Haj\'{o}s' conjecture %\footnote{The precise origin of the conjecture is unclear; see the introduction of \citet{Thomassen05}.}
is a famous strengthening of Hadwiger's conjecture stating that, for every $t\in \mathbb{N}$, every graph $G$ with $\chi(G)\ge t$ contains a \emph{subdivision of $K_t$} as a subgraph. For $t\le 4$, this is equivalent to Hadwiger's conjecture, since for $t\le 4$ a graph contains $K_t$ as a minor if and only if it contains a subdivision of $K_t$. However, starting from $t=5$, Haj\'{o}s' conjecture becomes significantly stronger than Hadwiger's conjecture, since the equivalence between minor and subdivision containment breaks for $t\ge 5$. \citet{Catlin79} famously showed that Haj\'{o}s' conjecture is false for every $t\ge 7$; also see~\citet{Thomassen05}. Indeed, while Hadwiger's conjecture and the dominating Hadwiger conjecture are true for almost every graph \citep{BCE80,IW25}, 
Haj\'{o}s' conjecture is false for almost every graph \citep{EF81}. Haj\'{o}s' conjecture remains open in the cases $t\in \set{5,6}$. In particular, the case $t=5$ has attracted a significant amount of attention~\cite{MR2232386,MR3436970,MR4371436,MR4429165} and is closely connected to the now-resolved longstanding Kelmans-Seymour conjecture~\cite{kelmans1,kelmans2,kelmans3,kelmans4}. 

To demonstrate the significant difference between Hadwiger's conjecture and Haj\'{o}s' conjecture in the case $t=5$, note that there exists an explicit family of $22$ non-isomorphic graphs\footnote{Including the Petersen graph, $H_{22}$.} $H_1, \dots,H_{22}$, depicted in \cref{fig:K5splits}, such that a graph $G$ contains $K_5$ as a minor if and only if it contains a subdivision of at least one of $H_1,\dots,H_{22}$. While Hadwiger's conjecture for $t=5$ only guarantees a subdivision of one of these $22$ graphs in any $5$-chromatic graph, Haj\'{o}s' conjecture makes the much stronger claim that, out of these $22$ graphs, only one is necessary, namely $K_5$ itself. 

\begin{figure}[ht]
    \centering
    {$H_1$\hspace{-2ex}}
    \includegraphics{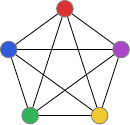}
    %\qquad 
    {\hspace*{1ex}$H_2$\hspace{-2ex}}
    \includegraphics{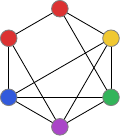}
%    \qquad 
     {\hspace*{1ex}$H_3$\hspace{-2ex}}
   \includegraphics{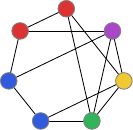}
%    \qquad 
     {\hspace*{1ex}$H_4$\hspace{-2ex}}
    \includegraphics{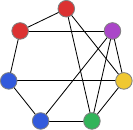}
%    \qquad 
     {\hspace*{1ex}$H_5$\hspace{-2ex}}
    \includegraphics{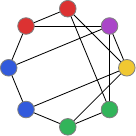}
     \\[2em]
%    \qquad 
     {$H_6$\hspace{-2ex}}
    \includegraphics{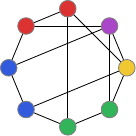}
%    \qquad 
     {\hspace*{1ex}$H_7$\hspace{-2ex}}
    \includegraphics{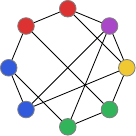}
%    \qquad 
     {\hspace*{1ex}$H_8$\hspace{-2ex}}
    \includegraphics{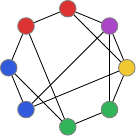}
%    \qquad 
     {\hspace*{1ex}$H_9$\hspace{-2ex}}
    \includegraphics{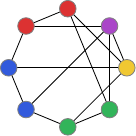}
%    \qquad 
     {\hspace*{1ex}$H_{10}$\hspace{-2ex}}
    \includegraphics{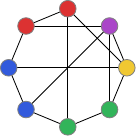}
     \\[2em]
%    \qquad 
     {$H_{11}$\hspace{-2ex}}
    \includegraphics{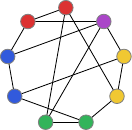}
%    \qquad 
     {\hspace*{1ex}$H_{12}$\hspace{-2ex}}
    \includegraphics{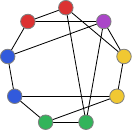}
%    \qquad 
     {\hspace*{1ex}$H_{13}$\hspace{-2ex}}
    \includegraphics{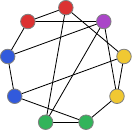}
%    \qquad 
     {\hspace*{1ex}$H_{14}$\hspace{-2ex}}
    \includegraphics{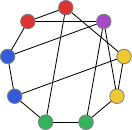}
%    \qquad 
     {\hspace*{1ex}$H_{15}$\hspace{-2ex}}
    \includegraphics{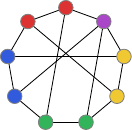}
     \\[2em]
%    \qquad 
     {\hspace*{1ex}$H_{16}$\hspace{-2ex}}
    \includegraphics{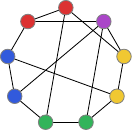}
%    \qquad 
     {\hspace*{1ex}$H_{17}$\hspace{-2ex}}
    \includegraphics{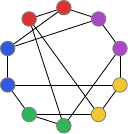}
%    \qquad 
     {\hspace*{1ex}$H_{18}$\hspace{-2ex}}
    \includegraphics{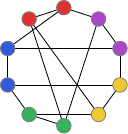}
%    \qquad 
     {\hspace*{1ex}$H_{19}$\hspace{-2ex}}
    \includegraphics{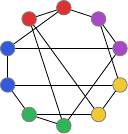}
%    \qquad 
     {\hspace*{1ex}$H_{20}$\hspace{-2ex}}
    \includegraphics{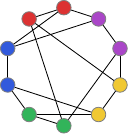}
     \\[2em]
%    \qquad 
     {\hspace*{1ex}$H_{21}$\hspace{-2ex}}
    \includegraphics{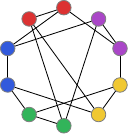}
%    \qquad 
     {\hspace*{1ex}$H_{22}$\hspace{-2ex}}
    \includegraphics{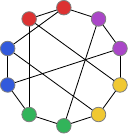}
    \caption{The 22 graphs obtained by splitting $K_5$, where vertices of the same colour represent pairs split from the same vertex of $K_5$. }
    \label{fig:K5splits}
\end{figure}

\Cref{thm:d4ct} quickly implies the following result, which shows that the list of graphs necessary in this statement can be reduced from the list of $22$ graphs given by Hadwiger's conjecture to only two graphs: $K_5=H_1$ and the unique $6$-vertex graph $\widehat{K}_5=H_2$ obtained from $K_5$ by splitting one vertex into two adjacent degree-$3$ vertices. This represents significant new evidence for the correctness of Haj\'{o}s' conjecture for $t=5$.

\begin{cor}
\label{NearHajos}
    Every graph $G$ with $\chi(G)\ge 5$ contains a subdivision of $K_5$ or $\widehat{K}_5$.
\end{cor}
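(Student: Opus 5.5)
The plan is to apply \cref{thm:d4ct}, in the strengthened form noted after its statement, and then build the subdivision directly from the model. Since $\chi(G)\ge 5$, \cref{thm:d4ct} gives a dominating $K_5$-model $(T_1,T_2,T_3,\set{v},\set{w})$ with the following properties:
\begin{itemize}
\item $vw\in E(G)$;
\item $T_3$ is a path, and $C:=G[V(T_3)\cup\set{v,w}]$ is an induced cycle;
\item every vertex of $T_3\cup\set{v,w}$ has a neighbour in $T_1$ and a neighbour in $T_2$;
\item every vertex of $T_2$ has a neighbour in $T_1$.
\end{itemize}
Five branch vertices will be chosen: $v$, $w$, a vertex $z\in V(T_3)$, a vertex $y\in V(T_2)$, and a ``hub'' inside $T_1$. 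The last of these may be split into two adjacent vertices, and that case produces $\widehat{K}_5$ instead of $K_5$.

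\emph{The three cycle vertices and the $T_2$ hub.} Pick any $z\in V(T_3)$. The cycle $C$ splits into three internally disjoint arcs joining the pairs among $v,w,z$. Each of $v,w,z$ has a neighbour in $T_2$, say $v_2,w_2,z_2$ respectively. Let $S_2$ be a minimal subtree of $T_2$ containing $v_2,w_2,z_2$. Then $S_2$ is either a subdivided claw with centre $y$ and leaves $v_2,w_2,z_2$, or a path whose middle terminal we call $y$; coincident terminals are a degenerate case handled the same way. In both cases $S_2$ together with the three edges to $v,w,z$ gives three internally disjoint $y$--$\set{v,w,z}$ paths inside $T_2\cup\set{v,w,z}$. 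Here domination is used: $y$ itself has a neighbour $y_1\in V(T_1)$, so $y$ acquires a fourth leg, namely the edge $yy_1$. This gives degree $4$ at $y$ without any splitting inside $T_2$.

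\emph{The $T_1$ hub.} Choose neighbours $v_1,w_1,z_1\in V(T_1)$ of $v,w,z$, and let $S_1$ be a minimal subtree of $T_1$ containing the four terminals $v_1,w_1,z_1,y_1$. A tree that is minimal with respect to at most four terminals has at most two vertices of degree at least $3$.
\begin{itemize}
\item If $S_1$ has a single vertex $x$ whose removal separates all four terminals (for instance a subdivided $K_{1,4}$, or degenerate shapes where $x$ is itself a terminal), then $x$ has four internally disjoint legs to $v,w,z,y$. Combined with the arcs of $C$ and the paths in $S_2$, all disjoint because $T_1$, $T_2$, $T_3$, $\set{v}$, $\set{w}$ are pairwise disjoint, this yields a subdivision of $K_5$ on $\set{v,w,z,y,x}$.
\item Otherwise $S_1$ is a subdivided ``double star'': two vertices $x,x'$ joined by a path, with two terminals hanging off each. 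This is exactly a subdivision of $K_5$ with one branch vertex split into two adjacent vertices of degree $3$, that is, a subdivision of $\widehat{K}_5$.
\end{itemize}

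\emph{Main obstacle.} The delicate step is verifying that every degenerate configuration of $S_1$ and $S_2$ falls into one of the two cases above. Degenerate configurations include terminals coinciding (for example $v_1=w_1$), terminals lying in the interior of the tree, and a path-shaped $S_2$. In each such case I would check that internal disjointness of the ten (or eleven) connecting paths is preserved. Where a coincidence seems to cause trouble, I would use the freedom to re-choose $z$ along $T_3$ and the neighbours $v_i,w_i,z_i$.
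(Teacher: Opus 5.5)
Your proof is correct and is essentially the paper's argument: you take a $K_4$-subdivision inside $T_2\cup\dots\cup T_5$ and join its four branch vertices through a minimal subtree of $T_1$, which gives $K_5$ or $\widehat{K}_5$ according to whether that subtree has one hub or two (the paper's path $P$ of length zero or of positive length). The differences are minor. You build the $K_4$-subdivision explicitly from the cycle $C$ and a tree in $T_2$, whereas the paper cites that a $K_4$-minor yields a $K_4$-subdivision; and the degenerate configurations you flag need no re-choosing, because each terminal in $T_1$ reaches its branch vertex outside $T_1$ by its own edge, so trivial legs and coinciding terminals cause no problem.
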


\begin{proof}
    By \cref{thm:d4ct}, there exists a dominating $K_5$-model $(T_1,\dots,T_5)$ in $G$. Clearly, the subgraph of $G$ induced by the vertices in $V(T_2)\cup\dots\cup V(T_5)$ contains a dominating $K_4$-model and hence contains $K_4$ as a minor. In particular, $G[V(T_2)\cup\dots\cup V(T_5)]$ contains a subdivision $S$ of $K_4$ as a subgraph. Let $b_1,\dots,b_4$ be the four vertices of degree $3$ in $S$ (the ``branch-vertices'' of the subdivision). Then by definition of a dominating model, we have that for each $i\in \set{1,2,3,4}$ there exists some $v_i\in V(T_1)$ such that $v_ib_i\in E(G)$. Now consider the smallest subtree $T$ of $T_1$ containing all of $\set{v_1,\dots,v_4}$. It is easy to see that every leaf of $T$ must lie in $\set{v_1,\dots,v_4}$. It follows that $T$ can be written as the union of five internally vertex-disjoint paths $P, P_1,\dots,P_4$ such that $P_i$ has one endpoint in $v_{\pi(i)}$ for each $i\in\set{1,2,3,4}$ and some permutation $\pi\in S_4$, and such that $P_1,P_2$ start at one of the endpoints of $P$ while $P_3,P_4$ start at the other endpoint of $P$. It can now be easily seen that the graph $S\cup P\cup P_1\cup P_2\cup P_3\cup P_4 \cup \set{v_1b_1, v_2b_2,v_3b_3,v_4b_4}$ forms a subdivision of $K_5$ if $P$ has length $0$, and a subdivision of $\widehat{K}_5$ if $P$ has positive length. Hence, $G$ indeed contains a subdivision of $K_5$ or $\widehat{K}_5$, concluding the proof. 
\end{proof}

Note that \cref{NearHajos} can be strengthened further by requiring that two incident edges of $K_5$ or two incident edges of $\widehat{K_5}$ in its unique $K_4$-subgraph are not subdivided (that is, remain edges) in the subdivision found in $G$. This is because every graph with a dominating $K_4$-model contains a subdivision of $K_4$ with two incident edges not subdivided and so, in the proof of \cref{NearHajos}, one may assume that $S$ has this property.

\section{The Proof} 
\label{Proof}

In this paper all graphs are simple. We frequently consider a minor of a graph that is obtained by contracting a set of edges. After the contraction, all resulting loops are removed and parallel edges are replaced by single edges.

We prove \cref{thm:d4ct} by induction with a strengthened induction hypothesis. For a dominating $K_5$-model $\TT = (T_1,\dots,T_5)$ in a graph $G$ and a vertex $v\in V(G)$, define
\[
\mathdefn{\ind_\TT(v)} \coloneqq \begin{cases}
    i & \text{if $v \in V(T_i)$ for $1\le i\le 5$}; \\
    0 & \text{if $v \in V(G)\setminus V(T_1\cup\dots\cup T_5)$}.
\end{cases}
\]
Consider an ordered clique $L = (v_1,\dots,v_k)$ in a graph $G$ with $k\leq 2$. A dominating $K_5$-model $\TT$ is \defn{$L$-compatible} if $\ind_\TT(v_i)\leq i$ for each $i\in\set{1,\dots,k}$, and if $k=2$ and $\ind_\TT(v_2) = 2$, then $\ind_\TT(v_1) = 1$. 

\begin{lem}[Main Induction Hypothesis]\label{lem:induction-hypothesis}
    For every graph $G$ and every ordered clique $L=(v_1,\dots,v_k)$ in $G$ with $k\leq 2$, either:
    \begin{itemize}
        \item $G$ is 4-colourable, or 
        \item $G$ has an $L$-compatible dominating $K_5$-model. 
    \end{itemize}
\end{lem}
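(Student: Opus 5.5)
We argue by considering a counterexample $(G,L)$ with $\abs{V(G)}$ minimum and, subject to that, $\abs{E(G)}$ maximum. We may assume $k=2$: a dominating model compatible with $(v_1,v_2)$ is also compatible with $(v_1)$ and with the empty clique, and a single vertex or the empty clique can be padded to a $2$-clique, or we may use a neighbour (isolated vertices are trivial). First we derive structure from minimality. $G$ is connected and not 4-colourable. Deleting a vertex or edge and applying induction shows that every proper subgraph either is 4-colourable or has a compatible model, which is also a model in $G$. We may therefore assume $G$ is 5-critical away from $L$; in particular every vertex outside $L$ has degree at least $4$.

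The main engine is contraction. Take an edge $xy$ with $x,y\notin L$ and contract it to $z$. By minimality, $G/xy$ is 4-colourable or has an $L$-compatible dominating model $\TT'$. In the second case we uncontract. If $z\in T'_i$, put both $x,y$ into $T_i$. This keeps $T_i$ connected, and every vertex of later branch sets still sees $T_i$. The danger is that $x$ or $y$ may fail to have a neighbour in some earlier $T_j$. If one of them does not see $T_j$, we instead place the bad vertex as a new singleton, or drop it if $i$ is large. Here the key trick from the introduction is useful: any model can be shrunk to $(T_1,T_2,T_3,\{v\},\{w\})$. So only $T_1,T_2,T_3$ need to dominate, and the final two sets are an adjacent pair. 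The clique $L$ lets us pass constraints through separations. If $G$ has a separation $(A,B)$ with $A\cap B$ a clique of size at most $2$, we apply induction to $G[A]$ with no constraint and to $G[B]$ with $L'$ equal to the separating clique ordered suitably. The compatibility condition $\ind(v_2)\le 2$ with $\ind(v_2)=2\Rightarrow \ind(v_1)=1$ is exactly what guarantees that a model in one side, whose low-index branch sets meet the separator, extends or stays dominating in $G$. The colourings glue after permuting colours. Hence $G$ is 3-connected, and we may even reduce to the case where it is internally 4-connected relative to $L$.

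In the 4-colourable case after contraction, the colouring of $G/xy$ gives $x,y$ the same colour. Choosing $xy$ cleverly (for example, $x$ of degree $4$ with two nonadjacent neighbours $y,y'$, contracting both $xy$ and $xy'$) gives a 4-colouring of $G$ by the standard Kempe/degree argument. So every low-degree vertex has a clique neighbourhood, or the contraction yields a model. Finally, for large structure, we use the 4-colour theorem: if $G$ minus $L$ (or $G$ after suitable reductions) is planar, colour it directly and handle $L$. Otherwise, by Wagner's theorem, a 3-connected nonplanar graph that is not $K_{3,3}$-like contains a $K_5$-minor or a $K_{3,3}$-subdivision. From a $K_{3,3}$- or $K_5$-subdivision together with the high minimum degree and 4-connectivity, we build an explicit dominating $K_5$-model. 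We take $T_1$ as a large connected set, for instance a BFS-type maximal connected dominating piece, and choose $T_2,T_3$ as paths, with $v,w$ an edge. We verify domination using the degree conditions, while respecting $L$ by keeping $v_1,v_2$ in $T_1,T_2$ or outside the model.

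The main obstacle is the uncontraction step. We need to show that the minimal counterexample admits an edge whose contraction yields either a model that lifts to a dominating, $L$-compatible model in $G$, or a colouring that lifts. The domination property is not contraction-closed, so we must pick the contracted edge so that both ends have neighbours in $T_1,T_2,T_3$. I would first try contracting an edge inside a maximal connected set $S$ with $N(S)$ a clique. Choosing $S$ to be a component of $G-N[u]$ for a minimum-degree vertex $u$ makes every contracted vertex automatically dominated by the branch sets containing the neighbourhood.
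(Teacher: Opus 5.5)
Your proposal has a genuine gap at the point you yourself flag as the main obstacle, and the fix runs opposite to what you suggest. Contracting an edge $xy$ with $x,y\notin L$ does not lift in general. If the merged vertex lies in $T'_i$ with $i\ge 2$, one of $x,y$ may have no neighbour in some earlier $T_j$. You cannot drop it, since that may disconnect $T_i$ or leave a later branch set with no neighbour in $T_i$. You cannot make it a ``new singleton'' either, since there is no free slot.

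The point of the $L$-compatibility hypothesis is that it makes exactly two kinds of contraction safe, and the paper uses only these (\cref{lem:contraction}):
\begin{itemize}
\item contracting a connected subgraph containing $v_1$ is safe, because the merged vertex has index $0$ or $1$, and enlarging $T_1$ never breaks domination since nothing needs to dominate $T_1$;
\item contracting a connected subgraph of $G[N_G(v_1)]$ containing $v_2$ is safe, because if the merged vertex lands in $T_2$ then $v_1\in T_1$ and every uncontracted vertex is adjacent to $v_1$.
\end{itemize}
Without this mechanism, your connectivity reductions and your recolouring step cannot be carried out. The recolouring step is essentially the paper's last step, except that there $v_1$ is the vertex that absorbs $u$ and $u$'s outer neighbours $M$.

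The structural part is also incomplete. Handling only separators that are cliques of size at most $2$ does not give $3$-connectivity. For a general $2$- or $3$-separation $(A,B)$ with $L\subseteq A$, you must turn $A\cap B$ into a clique $K$ using safe contractions inside $A$. For $3$-separations the paper needs a rerouting argument producing a connected $H\ni v_1$ and an induced $yz$-path all of whose vertices have a neighbour in $H$. You then $4$-colour $G[B]+E(K)$ and every $(G[B]+E(K))/E'$, and glue via \cref{lem:colouring}.

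Your endgame is false as stated. $K_{5,5}$ minus a perfect matching is $4$-connected, non-planar and $4$-regular, yet has no dominating $K_5$-model. So non-planarity, $4$-connectivity and minimum degree alone cannot produce the model. The paper instead splits on whether $G-V(L)$ contains a triangle:
\begin{itemize}
\item If it does, the Robertson--Seymour--Thomas legless-tripod theorem together with a lean triad gives either planarity or an $L$-compatible model (\cref{lem:4-connected-triangle}).
\item If it does not, then either \cref{lem:dominating-K4} applied to $G[N_G(v_1)]$ yields a model $(\set{v_1},T_2,\dots,T_5)$, or some $u\in N_G(v_1)\setminus\set{v_2}$ has at most two neighbours in $N_G(v_1)$. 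In the latter case, contract $u$ together with $v_1$ and its neighbours outside $N_G[v_1]$, which are independent by triangle-freeness. This is a safe contraction, and its $4$-colouring extends to $u$.
\end{itemize}

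Two smaller points. With your ordering (minimum $\abs{V(G)}$, then maximum $\abs{E(G)}$), $G-e$ is not a smaller instance, so the edge-criticality you invoke is unavailable; you only need minimum degree, which vertex deletion gives. And a $4$-colouring of $G-V(L)$ need not extend to $v_1,v_2$, so ``colour $G-L$ and handle $L$'' does not work.
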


We introduce some tools before proving \cref{lem:induction-hypothesis}. The following lemma is used to carry $L$-compatible dominating $K_5$-models through the induction steps. If edge contractions are done, the position of the resulting vertex in $L$ is the lowest position of any vertex of $L$ among the identified vertices (and if no vertex of $L$ is among the contracted vertices, then the resulting vertex is not in $L$).

\begin{lem}\label{lem:contraction}
    Let $L=(v_1,\dots,v_k)$ be an ordered clique in a graph $G$ with $k\leq 2$. 
    \begin{enumerate}[label=\textup{(\alph{*})}]
        \item\label{part:contraction-1} Let $H_1$ be a connected subgraph of $G$ that contains $v_1$. Let $(G',L')$ be the pair obtained from $(G,L)$ after contracting the edges of $H_1$. If $G'$ has an $L'$-compatible dominating $K_5$-model, then $G$ has an $L$-compatible dominating $K_5$-model. 
        \item\label{part:contraction-2} Let $H_2$ be a connected subgraph of $G[N_G(v_1)]$ that contains $v_2$. 
        Let $(G',L')$ be the pair obtained from $(G,L)$ after contracting the edges of $H_2$.
        If $G'$ has an $L'$-compatible dominating $K_5$-model, then $G$ has an $L$-compatible dominating $K_5$-model. 
        \item\label{part:deletion} Let $A \subseteq V(G)$. If $G[A]$ has an $(L \cap A)$-compatible dominating $K_5$-model, then $G$ has an $L$-compatible dominating $K_5$-model.
        \item\label{part:extension} Let $L'$ be an ordered clique on at most two vertices such that $L$ is an initial segment of $L'$. If $G$ has an $L'$-compatible dominating $K_5$-model, then $G$ has an $L$-compatible dominating $K_5$-model.
    \end{enumerate}
\end{lem}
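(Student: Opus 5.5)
The plan is to prove the four parts separately. Parts~\ref{part:contraction-1} and~\ref{part:contraction-2} share one ``uncontraction'' argument. Parts~\ref{part:deletion} and~\ref{part:extension} are direct checks of the definitions.

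\emph{Uncontraction.} Let $H$ be a connected subgraph of $G$ that is contracted to a vertex $x$ of $G'$, and let $\TT'=(T_1',\dots,T_5')$ be a dominating $K_5$-model in $G'$. Define $T_i$ by replacing $x$ with $V(H)$ in whichever $T_i'$ contains $x$ (if any), and leaving the other branch sets unchanged.
\begin{itemize}
\item Each $T_i$ is connected, because $H$ is connected and every edge of $G'$ at $x$ lifts to an edge of $G$ at some vertex of $H$.
\item Let $i<j$ and let $u\in V(T_j)\setminus V(H)$. Then $u$ has a neighbour in $T_i'$. If that neighbour is $x$, then $u$ is adjacent to some vertex of $H\subseteq T_i$. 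So these vertices remain dominated.
\end{itemize}
The only problematic requirement is that \emph{every} vertex of $H$ be dominated by $T_i$ for all $i<\ind_{\TT'}(x)$. This is the main obstacle. It is exactly what the compatibility conditions are designed to handle.

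\emph{Part~\ref{part:contraction-1}.} Here $x$ is the first vertex of $L'$, so compatibility gives $\ind_{\TT'}(x)\le 1$. Hence $H$ lies in $T_1$ or outside the model, and no domination of vertices of $H$ is required.

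It remains to check $L$-compatibility of $\TT$. We have $\ind_\TT(v_1)=\ind_{\TT'}(x)\le1$. If $v_2\in V(H)$, then $\ind_\TT(v_2)=\ind_\TT(v_1)\le 1$. If $v_2\notin V(H)$, then $v_2$ is the second vertex of $L'$ and its index is unchanged. The condition ``$\ind(v_2)=2\Rightarrow\ind(v_1)=1$'' transfers because $\ind_\TT(v_1)=\ind_{\TT'}(x)$.

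\emph{Part~\ref{part:contraction-2}.} Here $v_1\notin V(H)$, and $x$ plays the role of $v_2$ in $L'$, so $\ind_{\TT'}(x)\le 2$.
\begin{itemize}
\item If $\ind_{\TT'}(x)\le 1$, we argue exactly as before.
\item If $\ind_{\TT'}(x)=2$, compatibility forces $\ind_{\TT'}(v_1)=1$, so $v_1\in T_1$. Since $H\subseteq G[N_G(v_1)]$, every vertex of $H$ is adjacent to $v_1$. Hence every vertex of $H\subseteq T_2$ is dominated by $T_1$, which is the only requirement on them.
\end{itemize}
The indices of $v_1$ and $v_2$ are unchanged, so $L$-compatibility follows.

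\emph{Part~\ref{part:deletion}.} A dominating model in $G[A]$ is also one in $G$, since adjacency is preserved. Vertices of $L\setminus A$ receive index $0$. The only subtlety is when $v_1\notin A$ and $v_2\in A$. Then $v_2$ is the first vertex of $L\cap A$, so $\ind(v_2)\le 1$. This is stronger than $\ind(v_2)\le 2$, and it makes the conditional clause vacuous.

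\emph{Part~\ref{part:extension}.} Every constraint imposed by $L$ is among those imposed by $L'$, so an $L'$-compatible model is $L$-compatible.
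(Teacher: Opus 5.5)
Your proof is correct and takes essentially the same route as the paper. You uncontract the contracted vertex into whichever branch set contains it. In part (a), compatibility puts that vertex in $T_1$ or outside the model, so no new domination is needed. In part (b), the only new case (index $2$) is handled because $v_1\in T_1$ dominates $H_2\subseteq N_G(v_1)$. Parts (c) and (d) are the same direct checks as in the paper. The only difference is presentational: you isolate a general uncontraction step first, whereas the paper splits directly on the index of the contracted vertex.
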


\begin{proof}
    We prove the items in turn. Let $\TT = (T_1, \dots, T_5)$ be an $L'$-compatible dominating $K_5$-model in $G' = G/E(H_1)$.
    Let $v_1^\ast$ be the vertex obtained from the contracted edges. Since $v_1 \in V(H_1)$, $v_1^\ast$ is the first vertex of $L'$ and so $\ind_\TT(v_1^\ast) \leq 1$.
    First, suppose that $\ind_\TT(v_1^\ast) = 0$. If $v_2 \in V(H_1)$, then $L$ is disjoint from $\bigcup_{i = 1}^5 T_i$ and so $\TT$ is $L$-compatible in $G$. If $v_2 \notin V(H_1)$, then, since $\ind_\TT(v_1^\ast) = 0$ and $\TT$ is $L = (v_1^\ast, v_2)$-compatible, $\ind_\TT(v_2) \leq 1$. 
    Again, $\TT$ is $L$-compatible in $G$.
    If $\ind_\TT(v_1^\ast) = 1$, then $(T'_1, T_2, \dots, T_5)$ is an $L$-compatible dominating $K_5$-model in $G$, where $T'_1$ is obtained from $T_1$ by uncontracting the edges of $H_1$ regardless of whether $v_2 \in V(H_1)$.

    Let $\TT = (T_1, \dots, T_5)$ be an $L'$-compatible dominating $K_5$-model in $G' = G/E(H_2)$. Let $v_2^\ast$ be the vertex obtained from the contracted edges. Since $v_2 \in V(H_2)$ and $v_1 \notin V(H_2)$, $v_2^\ast$ is the second vertex of $L'$ and so $\ind_\TT(v_2^\ast) \leq 2$. If $\ind_\TT(v_2^\ast) = 0$, then $\TT$ is also $L$-compatible in $G$. If $\ind_\TT(v_2^\ast) = 1$, then $(T_1', T_2, \dots, T_5)$ is an $L$-compatible dominating $K_5$-model in $G$, where $T_1'$ is obtained from $T_1$ by uncontracting the edges of $H_2$. Finally, if $\ind_\TT(v_2^\ast) = 2$, then $\ind_\TT(v_1) = 1$ and so $(T_1, T'_2, T_3, T_4, T_5)$ is $L$-compatible where $T'_2$ is obtained from $T_2$ by uncontracting the edges of $H_2$. Also, since $V(H_2) \subseteq N_G(v_1)$, it remains a dominating $K_5$-model.

    Let $\TT = (T_1, \dots, T_5)$ be an $(L \cap A)$-compatible dominating $K_5$-model in $G[A]$. If $v_1 \in A$, then $\TT$ is an $L$-compatible dominating $K_5$-model in $G$. If $v_1 \notin A$, then $\ind_\TT(v_2) \leq 1$ (since $v_2$ is either the first vertex of $L \cap A$ or is not in $G[A]$) and so $\TT$ is still $L$-compatible in $G$.

    Let $\TT = (T_1, \dots, T_5)$ be an $L'$-compatible dominating $K_5$-model in $G$. Since $L$ is an initial segment of $L'$, $\TT$ is also $L$-compatible.
\end{proof}

Recall that a \defn{proper separation} $(A, B)$ of a graph $G$ is a pair $A, B \subset V(G)$ where $A \cup B = V(G)$, both $A \setminus B$ and $B \setminus A$ are non-empty, and there is no edge from $A \setminus B$ to $B \setminus A$. Further, $(A, B)$ is a \defn{proper $(\leq t)$-separation} (\defn{proper $t$-separation}) if it is a proper separation and $\abs{A \cap B} \leq t$ ($\abs{A \cap B} = t$). Recall that if a graph is not $(k + 1)$-connected, then either it is $K_{k + 1}$ or it has a proper $(\leq k)$-separation.

Here is a simple lemma that stitches together colourings of $G[A]$ and $G[B]$.

\begin{lem}\label{lem:colouring}
    Let $c \geq 1$. Suppose that $(A, B)$ is a proper $(\leq c)$-separation of a graph $G$. Let $G'$ be obtained from $G[B]$ by adding a clique $K$ on $A \cap B$. Suppose that $G[A]$ is $c$-colourable and, for every $E' \subseteq E(K)$, $G'/E'$\footnote{\defn{$G'/E'$} is the graph obtained from $G'$ by contracting the edges of $E'$.} is $c$-colourable. Then $G$ is $c$-colourable.
\end{lem}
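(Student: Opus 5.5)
The plan is to colour $G[A]$ first and then choose the edge set $E'$ so that the colouring of $G'/E'$ agrees with it on the separator. Fix a proper $c$-colouring $\varphi_A$ of $G[A]$. Let $S = A \cap B$, so $\abs{S} \leq c$. The colouring $\varphi_A$ restricted to $S$ induces a partition of $S$ into colour classes $S_1,\dots,S_m$, with $m \leq c$. Each $S_j$ is an independent set in $G[A]$, and hence also in $G[S]$, since $G[S]$ is a subgraph of $G[A]$.

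For each $j$, choose a spanning tree (for example a star) of the clique $K[S_j]$, and let $E'$ be the union of the edge sets of these spanning trees. Then $E' \subseteq E(K)$. Contracting $E'$ in $G'$ identifies each class $S_j$ into a single vertex $s_j$. Because $K$ is a clique on $S$, the vertices $s_1,\dots,s_m$ are pairwise adjacent in $G'/E'$.

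By hypothesis, $G'/E'$ has a proper $c$-colouring $\psi$. Since the $s_j$ form a clique, they receive distinct colours. Permuting colours, we may assume that $\psi(s_j) = \varphi_A(S_j)$, the common colour that $\varphi_A$ gives to $S_j$. This is possible because both assignments are injective from $\{1,\dots,m\}$ into the $c$ colours.

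Now define a colouring $\varphi_B$ of $G[B]$ as follows. For $v \in B \setminus A$, set $\varphi_B(v) = \psi(v)$. For $v \in S_j$, set $\varphi_B(v) = \psi(s_j)$. We check that $\varphi_B$ is proper.
\begin{itemize}
\item An edge $uv$ of $G[B]$ with both ends in $B\setminus A$ survives unchanged in $G'/E'$, so $\psi(u) \neq \psi(v)$.
\item An edge $uv$ with $u \in S_j$ and $v \in B\setminus A$ becomes the edge $s_jv$ in $G'/E'$, so again the colours differ.
\item An edge with both ends in $S$ cannot lie inside a single $S_j$, since $S_j$ is independent. It therefore joins two distinct classes $S_j$ and $S_{j'}$, which have distinct colours.
\end{itemize}
Hence $\varphi_B$ is a proper colouring of $G[B]$, and it agrees with $\varphi_A$ on $S$.

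Finally, set $\varphi = \varphi_A$ on $A$ and $\varphi = \varphi_B$ on $B$. This is well defined because the two colourings agree on $S$. Every edge of $G$ lies within $A$ or within $B$, since there is no edge from $A\setminus B$ to $B\setminus A$. Therefore $\varphi$ is a proper $c$-colouring of $G$.

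The only delicate step is contracting independent classes. One has to check that contracting $E'$ creates no loops that matter. The edges of $E'$ are edges of $K$, and they are genuine edges of $G'$ because $K$ was added to $G'$. By the paper's convention any resulting loops are removed, and contracted vertices come from independent sets of $G[A]$, so no edge of $G$ is lost. The case $S=\emptyset$ is trivial: $G$ is then the disjoint union of $G[A]$ and $G[B] = G'$, both of which are $c$-colourable.
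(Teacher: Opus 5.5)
Your proof is correct and follows essentially the same route as the paper: colour $G[A]$, contract each colour class of the separator $A \cap B$ to a single vertex (your spanning-tree choice of $E' \subseteq E(K)$ just makes explicit which contraction is meant), colour the contracted graph so that these clique vertices receive the matching colours, and glue. The only cosmetic difference is that you define the colouring on all of $B$ and check edges inside $A \cap B$ separately, whereas the paper uses the colouring of $G[A]$ on $A$ and the colouring of the contracted graph only on $B \setminus A$.
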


\begin{proof}
    Let $\chi_A \colon A \to \set{1, \dots, c}$ be a $c$-colouring of $G[A]$ and define $S_i \coloneqq \chi_A^{-1}(i) \cap A \cap B$ for $i \in \set{1, \dots, c}$. Let $G''$ be obtained from $G'$ by contracting each $S_i$ into a single vertex $s_i$ (if $S_i = \emptyset$, then there is no $s_i$). By supposition, $G''$ is $c$-colourable. Note that the $s_i$ form a clique in $G''$.
    Let $\chi_B \colon V(G'') \to \set{1, \dots, c}$ be a $c$-colouring of $G''$ with $\chi_B(s_i) = i$ if $s_i$ exists. Let $\chi$ be $\chi_A$ on $A$ and $\chi_B$ on $B \setminus A$. Then $\chi$ is a $c$-colouring of $G$.
\end{proof}

We need the following technical strengthening of a result by \citet{IW25} that finds dominating $K_4$-models. 

\begin{lem}\label{lem:dominating-K4}
    Let $G$ be a connected graph with at least four vertices. Let $L$ be a clique in $G$ on at most two vertices, such that every vertex of degree at most 2 in $G$ is in $L$, and if some vertex in $L$ has degree at least 3 in $G$, then $\abs{L} = 1$. Then $G$ has a dominating $K_4$-model $(T_1, T_2, T_3, T_4)$ with $L \subseteq T_1$.
\end{lem}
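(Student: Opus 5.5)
The approach is to reduce to finding a dominating $K_3$-model in what remains after $T_1$ is fixed. Every graph containing a cycle has a dominating $K_3$-model: take an induced cycle $C$, an edge $uv$ of $C$, and set $T_2=C-\set{u,v}$ (a path, or a single vertex if $C$ is a triangle), $T_3=\set{u}$, $T_4=\set{v}$. Since $C$ is induced, $T_2$ is a path whose ends are adjacent to $u$ and to $v$. So it suffices to find a connected $T_1\supseteq L$ and a cycle $C$ in $G-V(T_1)$ all of whose vertices have a neighbour in $T_1$. Even better is a small dominating $K_3$-model in $G-V(T_1)$ all of whose vertices are dominated by $T_1$.

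\textbf{Step 1 (starting configuration).} Among all pairs $(T_1,D)$ with $T_1\supseteq L$ connected and $D$ a component of $G-V(T_1)$ containing a cycle, I choose one with $\abs{V(T_1)}$ maximal. Existence comes from the degree hypotheses.
\begin{itemize}
\item If $L=\set{a}$, every vertex of $G-a$ has degree at least $2$ in $G-a$, so some component of $G-a$ contains a cycle.
\item If $L=\set{a,b}$ with both of degree at most $2$, I remove $L$ and note that any vertex of $G-L$ losing two edges is a common neighbour of $a$ and $b$. Such a vertex is unique, since $\deg(a)\le 2$ and $a$ is already adjacent to $b$. I then either find a cycle in $G-L$ directly or handle the tiny leftover cases by hand, using $\abs{V(G)}\ge 4$.
\item If $L=\emptyset$, I start from any single vertex.
\end{itemize}

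\textbf{Step 2 (consequences of maximality).} Let $X\subseteq V(D)$ be the set of vertices of $D$ with a neighbour in $T_1$; it is nonempty because $G$ is connected. For $x\in X$, adding $x$ to $T_1$ keeps $T_1$ connected. By maximality, $D-x$ must therefore be a forest, so every $x\in X$ lies on every cycle of $D$. Vertices of $D$ are not in $L$, so they have degree at least $3$ in $G$, and vertices of $D\setminus X$ have all their neighbours in $D$. Fix $x\in X$. Any leaf or isolated vertex of the forest $D-x$ has degree at most $2$ in $D$, so it must lie in $X$. A vertex $y\in X$ that lies on no cycle contradicts the previous paragraph. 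Hence every leaf $y$ of $D-x$ is adjacent to $x$, lies on every cycle, and the components of $D-x$ containing cycle-relevant leaves have at least two leaves. In particular, I get leaves $y,z\in X$ in one tree component of $D-x$ with $xy,xz\in E(G)$. All cycles of $D$ pass through $x$, $y$ and $z$, and then, by the degree condition at internal tree vertices, essentially through a rigid structure.

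\textbf{Step 3 (building the model).} Let $P$ be the $y$--$z$ path in the tree. The cycle $xPx$ has $x,y,z$ dominated by $T_1$, but its internal vertices need not be. I would fix this by re-choosing the branch sets, for example $T_1'=T_1\cup (P-z)$ (connected via $y$), leaving $x$ and $z$ adjacent and dominated. I then need a third branch set adjacent to all of $T_1'$, $x$ and $z$. The natural candidate comes from a second leaf pair or from an internal vertex of degree at least $3$ of the tree, whose extra branch must return to $x$ through another leaf in $X$. A cleaner alternative is to show that, by maximality, $D$ is so constrained that the cycle through $x,y,z$ can be taken to be a triangle or to have all vertices in $X$.

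The main obstacle is Step 3: turning the structural information ``every attachment vertex is on every cycle, and all leaves of $D-x$ are attachments'' into a concrete dominating triple. I would first try to prove that $D$ contains a cycle consisting entirely of vertices of $X$, by choosing $x$ and the component carefully and exploiting that internal tree vertices have degree at least $3$. If that fails, I would fall back to the re-partition trick above, enlarging $T_1$ along part of the cycle.
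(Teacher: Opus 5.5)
Steps 1 and 2 are correct as far as they go. In Step 1 with $L=\set{a,b}$ there are no leftover cases to check by hand: every vertex of $G-L$ has degree at least $1$ in $G-L$, and at most one has degree exactly $1$, so $G-L$ cannot be a forest. The genuine gap is Step 3. You leave it as a list of things you would try, so the argument never produces the model. Your expectations there are also wrong. You say the internal vertices of the cycle $xPx$ need not be dominated by $T_1$, and you anticipate internal tree vertices of degree at least $3$; your maximality rules out both. The fallback $T_1'=T_1\cup(P-z)$ is not a construction either, because the remaining branch set is never identified.

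The missing step uses only facts you already proved in Step 2.
\begin{enumerate}
\item Since $D$ contains a cycle and every vertex of $X$ lies on every cycle of $D$, no vertex of $D$ has degree at most $1$ in $D$: such a vertex would lie in $X$ but on no cycle.
\item Hence $D-x$ has no isolated vertices. Every leaf of $D-x$ has degree $2$ in $D$, so it lies in $X$ and therefore on every cycle of $D$.
\item Fix a cycle $C$ of $D$. It passes through $x$, and $C-x$ is a path inside one component of $D-x$. Every component of $D-x$ has at least two vertices, hence at least two leaves, and all of these leaves lie on $C-x$. A path in a tree meets the tree's leaves only at its ends. So $D-x$ is a single tree with exactly two leaves $y,z$, i.e.\ a path $P$, and $C-x=P$.
\item If $x$ had a neighbour $w$ in the interior of $P$, then $x$ together with the subpath $yPw$ would be a cycle avoiding $z\in X$, a contradiction.
\item Thus $D$ is itself a cycle. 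It is induced in $G$ because it is a component of $G-V(T_1)$. Each of its vertices has degree $2$ in $D$ but at least $3$ in $G$, so $X=V(D)$.
\end{enumerate}
Now $(T_1,D-u-v,\set{u},\set{v})$, for any edge $uv$ of $D$, is the required model.

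With this added, your proof is complete and differs from the paper's. The paper first reduces to the $2$-connected case by induction on $\abs{V(G)}$. It then chooses a cycle $C$ maximising the component $H\supseteq L$ of $G-C$ (and then minimising $\abs{V(C)}$), using $2$-connectivity to reroute $C$. Your extremal choice of $T_1$ needs neither the induction nor the rerouting.
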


\begin{proof}
    We proceed by induction on $\abs{V(G)}$. If $\abs{V(G)} = 4$, then $G = K_4$ and $\abs{L} \leq 1$, and the claim is immediate. Now assume $\abs{V(G)} \geq 5$ and the result holds for graphs with less than $\abs{V(G)}$ vertices.
    
    Suppose $G$ is not 2-connected. So $G$ has a proper 1-separation $(A, B)$. Let $v$ be the unique vertex in $A \cap B$. Since $L$ is a clique, without loss of generality, $L \subseteq A$. 
    Note every vertex of $B \setminus A$ has degree at least $3$ and so $\abs{B} \geq 4$.
    Apply induction to $G[B]$ with $L' \coloneqq \set{v}$, which
    is valid since $v$ is the only possible vertex with degree at most two in $G[B]$: $G[B]$ has a dominating $K_4$-model $(T_1, T_2, T_3, T_4)$ with $v \in T_1$. But then
    $(T_1 \cup G[A],T_2,T_3,T_4)$ is a dominating $K_4$-model in $G$ with $L \subseteq T_1 \cup G[A]$. Now assume that $G$ is 2-connected.
    
    Suppose that $G - L$ is a forest. If $G - L$ has an isolated vertex $v$, then the only neighbours of $v$ are in $L$, implying $\deg(v) \leq \abs{L} \leq 2$ and $v$ is in $L$, which is a contradiction. Thus $G - L$ contains at least two leaves, denoted $v$ and $w$. Since $v$ and $w$ are not in $L$, they both have degree at least  $3$, implying that $v$ and $w$ both have at least two neighbours in $L$. So $\abs{L} = 2$, and every vertex in $L$ is adjacent to both $v$ and $w$. Since $L$ is a clique, every vertex in $L$ has degree at least $3$, which contradicts the assumption that $\abs{L} = 1$ in this case. Thus, there is a pair $(H, C)$ such that $C$ is a cycle, $H$ is a component of $G - C$, and $L \subseteq H$. Choose such a pair with $\abs{V(H)}$ maximal and, subject to this, $\abs{V(C)}$ minimal. Then $C$ is an induced cycle. Since $G$ is 2-connected, there are distinct vertices $y$ and $z$ in $C$ both adjacent to $H$.
    
    Suppose there is a vertex $v \in V(C)$ not adjacent to $H$. So $v \notin \set{y, z}$. 
    Since $v \notin L$, $\deg(v) \geq 3$. Since $C$ is induced, $v$ is adjacent to a component $J$ of $G-C$ distinct from $H$. 
    Since $G$ is 2-connected, there is a vertex $w$ in $C-v$ adjacent to $J$. Without loss of generality, $w\neq y$. Let $P$ be the $vw$-path in $C$ avoiding $y$. 
    Let $Q$ be a $vw$-path through $J$. So $P \cup Q$ is a cycle, and there is a component of $G - (P\cup Q)$ that includes $H$ and $y$ (and so contains $L$), which contradicts the maximality of $\abs{V(H)}$.
    
    Now assume that every vertex in $C$ is adjacent to $H$. Let $xy$ be an edge of $C$. Then $(H,C-x-y,\set{x},\set{y})$ is a dominating $K_4$-model with $L \subseteq H$.
\end{proof}

Note that \cref{lem:dominating-K4} implies that every graph on at least four vertices that does not contain a dominating $K_4$-model has two non-adjacent vertices of degree at most 2. The corresponding result for $K_4$-minor-free graphs is well-known (see, for example, \citep[Prop.~7.3.1]{Diestel5}).

Our final preliminary lemma deals with one of the cases where $G$ is 4-connected.

\begin{lem}\label{lem:4-connected-triangle}
    Let $G$ be a 4-connected graph and $e = v_1 v_2$ an edge of $G$. If $G$ contains a triangle that is vertex-disjoint from $e$, then either $G$ is planar or $G$ has a $(v_1, v_2)$-compatible dominating $K_5$-model.
\end{lem}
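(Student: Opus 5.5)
The plan is to use the triangle $xyz$ as the last three branch-sets, $T_3=\set{x}$, $T_4=\set{y}$, $T_5=\set{z}$ (the order among them is irrelevant). Then it suffices to find disjoint connected subgraphs $T_1,T_2$ of $G-\set{x,y,z}$ such that:
\begin{itemize}
\item each of $T_1$ and $T_2$ has a neighbour of each of $x,y,z$;
\item every vertex of $T_2$ has a neighbour in $T_1$;
\item $v_1\in V(T_1)$.
\end{itemize}
The last condition gives compatibility automatically: $\ind(v_1)=1$, and $v_2$ may then lie in $T_1$, in $T_2$, or outside, since $\ind(v_2)=2$ is allowed once $\ind(v_1)=1$. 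Since $e$ is disjoint from the triangle, $v_1,v_2\in V(G)\setminus\set{x,y,z}$, and $G-\set{x,y,z}$ is connected by 4-connectivity.

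The first step is to find two disjoint connected subgraphs $S_1,S_2$ of $G-\set{x,y,z}$, each adjacent to all of $x,y,z$, or else show that $G$ is planar. Such a pair is exactly a $K_{3,3}$-minor rooted at $\set{x,y,z}$. I would argue through a rooted-minor characterisation for 3-connected graphs: either such a rooted $K_{3,3}$ exists, or $G$ has a planar embedding with $x,y,z$ on a common face. Alternatively, I would argue directly via Kuratowski's theorem, rerouting a $K_5$- or $K_{3,3}$-subdivision to the triangle using Menger paths, which 4-connectivity supplies. In the obstructed case $G$ is planar and we are done. Note that $xyz$ bounds a face then, which is consistent.

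The second step upgrades $(S_1,S_2)$ to a dominating pair. Among all connected $T_2\subseteq G-\set{x,y,z,v_1}$ adjacent to each of $x,y,z$ and such that the component $T_1$ of $G-\set{x,y,z}-V(T_2)$ containing $v_1$ is also adjacent to each of $x,y,z$, choose $T_2$ with $\abs{V(T_2)}$ minimal. Suppose some $u\in V(T_2)$ has no neighbour in $T_1$. Then I would show, using minimality, that deleting $u$ from $T_2$, or moving a suitable portion of $T_2$ into $T_1$, yields a smaller valid $T_2$. The tool for this is that $u$ has degree at least $4$, with all its neighbours in $T_2\cup\set{x,y,z}$ or in components of $G-\set{x,y,z}-V(T_2)$ other than $T_1$, together with 4-connectivity to reconnect. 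Components other than $T_1$ can also be absorbed into $T_1$ or left outside, with index $0$.

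Obtaining an initial valid $T_2$ avoiding $v_1$ from $(S_1,S_2)$ needs care. If $v_1\in S_2$, swap the roles of $S_1$ and $S_2$. If $v_1$ lies in neither, grow $S_1$ along a path to $v_1$ in $G-\set{x,y,z}-V(S_2)$. If no such path exists, use 4-connectivity to reroute so that $v_1$ is not separated.

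I expect the main obstacle to be the minimality argument in the second step. The difficulty is ensuring that every vertex of $T_2$ is dominated by $T_1$ without destroying the adjacency of $T_2$ to all of $x,y,z$ or its connectivity. This likely requires a case analysis on a minimal $T_2$ whose structure is a subdivided claw or path with ends at neighbours of $x,y,z$: internal vertices of degree $2$ in $T_2$ must have other neighbours, and 4-connectivity forces these into $T_1$, possibly after enlarging $T_1$. If this fails in some configuration, I would fall back on exhibiting a separation of order at most $3$, contradicting 4-connectivity, or on concluding planarity.
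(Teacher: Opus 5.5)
\textbf{Verdict: there is a genuine gap, in Step~2.} Your reduction is the right one and is the paper's. The triangle $x,y,z$ becomes $T_3,T_4,T_5$, and $v_1$ goes into $T_1$, which settles compatibility. Everything then reduces to finding a connected $T_2$ adjacent to $x,y,z$ whose vertices are all dominated by a connected $T_1\ni v_1$ that is also adjacent to $x,y,z$.

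Step~1 is fine. It is the ``legless tripod'' dichotomy, (3.4)--(3.5) of Robertson, Seymour and Thomas. That dichotomy needs the absence of proper $(\le 3)$-separations, not just 3-connectivity, but 4-connectivity supplies this. Placing $v_1$ initially needs no rerouting: $G-\{x,y,z\}$ is connected, so call $S_1$ whichever of $S_1,S_2$ is met first by a path from $v_1$.

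Step~2 is the entire content of the lemma, and it is not proved. The sentence ``I would show, using minimality, that deleting $u$ \ldots\ or moving a suitable portion of $T_2$ into $T_1$ yields a smaller valid $T_2$'' is not an argument. Moreover, minimising $|V(T_2)|$ alone does not supply what is needed.

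Two ingredients are missing.

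\emph{First, the local degree bound.} Minimality of $|V(T_2)|$ makes the triad $T$ formed by $T_2$ and the feet $x,y,z$ lean, so its legs are induced paths. Then every vertex of $T_2$ has at most three neighbours in $V(T)$:
\begin{itemize}
\item the centre $a$ has exactly three;
\item an internal vertex $w$ of one leg has two neighbours on its own leg;
\item if $w$ also had a neighbour on each of the other two legs (other than $a$), there would be a triad centred at $w$ that misses $a$;
\item if $w$ had two neighbours $y_1,y_2$ on one other leg, with $y_1$ nearer $a$, there would be a triad missing $y_1$.
\end{itemize}
Both of the last two cases contradict leanness. With $\delta(G)\ge 4$, every vertex of $T_2$ therefore has a neighbour outside $V(T)$.

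\emph{Second, and this is the real gap, that neighbour must lie in $T_1$.} Your $T_1$ is only the component of $G-V(T)$ containing $v_1$. Nothing in your setup excludes other components $C$ of $G-V(T)$, and a vertex of $T_2$ may see only such a $C$. ``Absorbing'' $C$ into $T_1$ is impossible, because $C$ has no edge to $T_1$. Putting $C$ into $T_2$ instead creates undominated vertices.

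The paper gets around this with (3.1) of Robertson--Seymour--Thomas. It gives a lean triad $T$ with feet $x,y,z$ such that $G-V(T)$ is \emph{connected} and contains $v_1$ together with the other tripod leg (your $S_1$). One then takes $T_1:=G-V(T)$ in its entirety.

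To prove this yourself you would need two changes:
\begin{itemize}
\item a different extremal choice: first maximise the component containing $v_1$ and $S_1$, then minimise $T$;
\item a rerouting argument showing that any further component would be cut off from the rest by at most three vertices. This is much like the rerouting in the proof that the minimal counterexample is 4-connected.
\end{itemize}
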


\begin{proof}
The proof uses the following concept introduced by 
\citet*{RST-Comb93}. A \defn{triad with feet $x, y, z$} consists of an $ax$-path, an $ay$-path, and an $az$-path such that the only vertex on at least two of the paths is $a$, where $a \notin \set{x, y, z}$ (in particular, each path has length at least $1$ and the degrees of $x, y, z$ in the triad are all $1$). 
    
    Let $v_3 v_4 v_5$ be a triangle that is vertex-disjoint from $e$.
    Since $G$ is 4-connected, $G$ has no proper $(\leq 3)$-separation.
    Thus (3.4) and (3.5) of \citet*{RST-Comb93} imply that either $G$ is planar or there are two edge-disjoint triads $T_1, T_2$, both with feet $v_3, v_4, v_5$, such that $V(T_1) \cap V(T_2) = \set{v_3, v_4, v_5}$ (this is a `legless tripod on $v_3, v_4, v_5$' in the language of \cite{RST-Comb93}).
    Since $G$ is 4-connected, $G - \set{v_3, v_4, v_5}$ is connected and so without loss of generality there is a path from $v_1$ to $T_1$ that is vertex-disjoint from $T_2$ (this includes the case where $v_1 \in V(T_1)$). In particular, there is a component $W$ of $G - V(T_2)$ such that $\set{v_1} \cup (T_1 - \set{v_3, v_4, v_5}) \subseteq W$. 

    A triad $T$ is \defn{lean} if, for every triad $T'$ with the same feet as $T$, $V(T')$ is not a proper subset of $V(T)$. Since $G$ has no proper $(\leq 3)$-separation, (3.1) of \cite{RST-Comb93} implies that there is a lean triad $T$ with feet $v_3, v_4, v_5$ such that $G - V(T)$ is connected and contains $W$. We claim that $\TT = (G - V(T), T - \set{v_3, v_4, v_5}, v_3, v_4, v_5)$ is a $(v_1, v_2)$-compatible dominating $K_5$-model. 
    
    First $v_1 \in W \subseteq G - V(T)$ and so $\ind_\TT(v_1) = 1$. Since $v_3 v_4 v_5$ is disjoint from $e$, $\ind_\TT(v_2) \leq 2$ and so $\TT$ is $(v_1, v_2)$-compatible. Second, every part of $\TT$ is connected. Third, $v_3 v_4 v_5$ is triangle and so the dominating condition between the third, fourth, and fifth parts is satisfied. Next, fix $i \in \set{3, 4, 5}$. Vertex $v_i$ has a neighbour in each of $T - \set{v_3, v_4, v_5}$ and $T_1 - \set{v_3, v_4, v_5}$. Since $T_1 - \set{v_3, v_4, v_5} \subseteq W \subseteq G - V(T)$ the dominating condition holds between the third/fourth/fifth parts and the first/second parts of $\TT$. It remains to check that every vertex of $T - \set{v_3, v_4, v_5}$ has a neighbour outside $T$. Since $G$ is 4-connected, $G$ has minimum degree at least $4$, so it suffices to show that every vertex of $T - \set{v_3, v_4, v_5}$ has at most three neighbours in $V(T)$.

    \begin{figure}[htp]
        \centering
        \includegraphics[width=\textwidth]{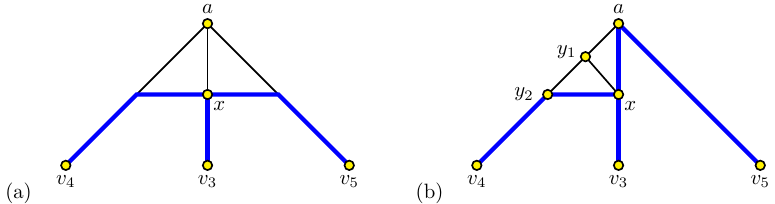}
        \caption{}\label{fig:triads}
    \end{figure}

    Suppose, towards a contradiction, that there is a vertex $x$ of $T - \set{v_3, v_4, v_5}$ with at least four neighbours in $T$.    Let $a$ be the common vertex of the three paths that make up $T$ and let $P_i$ be the $av_i$-path of $T$ (for $i = 3, 4, 5$). Since $T$ is lean, each $P_i$ is induced in $G$. In particular, $a$ only has three neighbours in $T$ and so $x \neq a$. Thus $x$ is an internal vertex of some $P_i$ which we may and will assume is $P_3$. Since $P_3$ is induced, $x$ has only two neighbours on $P_3$. If $x$ has a neighbour in each of $P_4 - a$ and $P_5 - a$, then there is a triad $T'$ with feet $v_3, v_4, v_5$ where $V(T') \subseteq V(T) \setminus \set{a}$ (see \cref{fig:triads}(a) where $x$ is the common vertex of the three paths). This contradicts the leanness of $T$. Thus, we may and will assume that $x$ has at least two neighbours on $P_4 - a$. In particular, there are vertices $y_1, y_2$ on $P_4 - a$ both adjacent to $x$ such that $y_1$ lies between $a$ and $y_2$ on $P_4$. But then there is a triad $T'$ with feet $v_3, v_4, v_5$ where $V(T') \subseteq V(T) \setminus \set{y_1}$ (see \cref{fig:triads}(b) where $x$ is the common vertex of the three paths). This contradicts the leanness of $T$, as required.
\end{proof}

We are now ready to prove \cref{lem:induction-hypothesis} which implies \cref{thm:d4ct}.

\begin{proof}[Proof of \cref{lem:induction-hypothesis}]
    Suppose, for a contradiction, that there is a pair $(G, L)$ where $G$ is a non-4-colourable graph, $L$ is an ordered clique in $G$ of size at most 2, and $G$ has no $L$-compatible dominating $K_5$-model. 
    Choose such a pair $(G, L)$ where $\abs{V(G)}$ is minimal and, subject to this, $\abs{L}$ is maximal. Since $G$ is not 4-colourable, $\abs{V(G)} \geq 5$.

    \begin{claim}
        Every vertex of $G$ has degree at least 4.
    \end{claim}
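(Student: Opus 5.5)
The plan is a degeneracy argument that combines the minimality of $\abs{V(G)}$ with \cref{lem:contraction}\ref{part:deletion}. Suppose, towards a contradiction, that some vertex $v$ of $G$ has $\deg_G(v)\leq 3$. Let $A \coloneqq V(G)\setminus\set{v}$ and consider the pair $(G[A], L\cap A)$. Here $L\cap A$ is the ordered clique obtained from $L$ by deleting $v$ if $v\in L$, keeping the induced order. It is a clique in $G[A]$ with at most two vertices, so the pair is a legitimate input to \cref{lem:induction-hypothesis}. Since $\abs{A}<\abs{V(G)}$, the minimal choice of $(G,L)$ means $(G[A],L\cap A)$ is not a counterexample. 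So either $G[A]$ is $4$-colourable, or $G[A]$ has an $(L\cap A)$-compatible dominating $K_5$-model.

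In the first case, take a $4$-colouring of $G-v$. The vertex $v$ has at most three neighbours, so at most three colours appear on $N_G(v)$ and some colour remains free for $v$. This gives a $4$-colouring of $G$, contradicting that $G$ is not $4$-colourable.

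In the second case, \cref{lem:contraction}\ref{part:deletion} lifts the $(L\cap A)$-compatible dominating $K_5$-model of $G[A]$ to an $L$-compatible dominating $K_5$-model of $G$. This contradicts the choice of $(G,L)$. Either way we reach a contradiction, so every vertex has degree at least $4$.

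The main point needing care is the case $v\in L$. There the reindexing of $L\cap A$ could in principle break compatibility, for instance if $v=v_1$ and $v_2$ becomes the first vertex of $L\cap A$. However, exactly this situation is handled inside the proof of \cref{lem:contraction}\ref{part:deletion}: if $v_1\notin A$, then $\ind_\TT(v_2)\leq 1$, so the model is still $L$-compatible. Hence nothing beyond citing that lemma is required, and the argument is routine.
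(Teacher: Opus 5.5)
Your proof is correct and is essentially the paper's argument: delete the low-degree vertex, use minimality of $\abs{V(G)}$ on $(G-x, L\setminus\set{x})$, and either extend the $4$-colouring greedily or lift the model with \cref{lem:contraction}\ref{part:deletion}. Your remark about the reindexing when $v_1$ is deleted is accurate; that case is already covered inside the proof of that lemma.
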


    \begin{proofclaim}
        Suppose that there is a vertex $x \in V(G)$ with $\deg(x) \leq 3$. By minimality of $\abs{V(G)}$, either $G - x$ is 4-colourable or $G - x$ has an $L \setminus \set{x}$-compatible dominating $K_5$-model. In the former case, $G$ is 4-colourable and in the latter case, \cref{lem:contraction}\ref{part:deletion} implies that $G$ has an $L$-compatible dominating $K_5$-model, which is the required contradiction.
    \end{proofclaim}
    
    \begin{claim}
        $G$ is 2-connected.
    \end{claim}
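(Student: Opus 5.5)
We argue by contradiction using the minimality of $(G, L)$. Suppose $G$ is not 2-connected. If $G$ is disconnected, then some component is not 4-colourable, since otherwise $G$ would be. That component $C$ has fewer vertices than $G$: every component has a vertex, and each vertex has degree at least 4, so every component has at least five vertices. By minimality, $C$ has an $(L\cap V(C))$-compatible dominating $K_5$-model. \Cref{lem:contraction}\ref{part:deletion} with $A = V(C)$ then gives an $L$-compatible model in $G$, a contradiction. So we may assume $G$ is connected and has a proper $1$-separation $(A,B)$ with $A\cap B=\set{x}$.

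Since $L$ is a clique, it lies entirely within $A$ or entirely within $B$; by symmetry assume $L\subseteq A$. Both $G[A]$ and $G[B]$ are proper induced subgraphs, so each has fewer vertices than $G$. First consider $G[B]$, with the ordered clique $(x)$ (or with $L$ if $L \subseteq B$, though by the choice above we may take $L\subseteq A$). If $G[B]$ is not 4-colourable, minimality gives a $(x)$-compatible dominating $K_5$-model $\TT$ in $G[B]$, that is, one with $\ind_\TT(x)\le 1$. We then extend it to $G$. If $\ind_\TT(x)=1$, replace $T_1$ by $T_1\cup G[A]$. This is connected because $G[A]$ is connected through $x$ ($G$ is connected, and any component of $G[A]$ missing $x$ would be a component of $G$). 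All domination requirements concern vertices in $T_2,\dots,T_5\subseteq B$, so they still hold. Now every vertex of $L\subseteq A$ has index at most $1$, so the model is $L$-compatible, a contradiction. If $\ind_\TT(x)=0$, then $\TT$ avoids $A$ entirely, so every vertex of $L$ has index $0$ and $\TT$ is again $L$-compatible. Hence $G[B]$ is 4-colourable.

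Symmetrically, if $G[A]$ is not 4-colourable, minimality gives an $(L\cap A)=L$-compatible dominating $K_5$-model in $G[A]$, and \cref{lem:contraction}\ref{part:deletion} lifts it to $G$. So $G[A]$ is also 4-colourable. Finally, permute the colours of the colouring of $G[B]$ so that $x$ receives the same colour in both colourings, and combine them to get a 4-colouring of $G$. This is \cref{lem:colouring} with $c=1$ in spirit, but it is direct: there are no edges between $A\setminus B$ and $B\setminus A$. This contradicts the choice of $G$.

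The only delicate step is the extension when $\ind_\TT(x)=1$: one must check that enlarging $T_1$ by the whole of $G[A]$ keeps it connected and keeps the model dominating and $L$-compatible. Both follow because the cut vertex $x$ already lies in $T_1$ and all other parts lie inside $B\setminus A$.
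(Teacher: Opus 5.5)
Your proof is correct and follows essentially the same route as the paper: apply minimality to both sides of a proper $(\le 1)$-separation, then glue the two 4-colourings. The paper also treats the disconnected case as a proper $0$-separation and glues via \cref{lem:colouring}. The one real difference is on the $B$ side. There your choice of the ordered clique $(x)$ forces the manual extension of $T_1$ by $G[A]$. Applying minimality to $G[B]$ with $L\cap B$ and then \cref{lem:contraction}\ref{part:deletion}, exactly as you did for $G[A]$, gives the same conclusion with no extra work.
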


    \begin{proofclaim}
        If not, then, since $\abs{V(G)} \geq 5$, there is a proper $(\leq 1)$-separation $(A, B)$ of $G$. Since $G$ does not have an $L$-compatible dominating $K_5$-model, \cref{lem:contraction}\ref{part:deletion} implies that $G[A]$ does not have an $(L \cap A)$-compatible dominating $K_5$-model. Minimality of $\abs{V(G)}$ implies that $G[A]$ is 4-colourable. Similarly $G[B]$ is 4-colourable. Since $\abs{A \cap B} \leq 1$, \cref{lem:colouring} implies that $G$ is also 4-colourable, which is the required contradiction.
    \end{proofclaim}

    By maximality of $\abs{L}$ and \cref{lem:contraction}\ref{part:extension} we have $\abs{L} = 2$. Let $v_1, v_2$ be the vertices of $L$ (in that order).

    \begin{claim}
        $G$ is 3-connected. 
    \end{claim}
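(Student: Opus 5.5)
The plan is to assume $G$ has a proper $(\leq 2)$-separation and derive a contradiction from \cref{lem:colouring} with $c = 4$. Since $G$ is 2-connected and has at least five vertices, such a separation is a proper $2$-separation $(A,B)$ with $A \cap B = \set{x,y}$. The two halves are then coloured using minimality of $\abs{V(G)}$, with \cref{lem:contraction} transferring dominating models back to $G$.

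\emph{Choosing the sides.} Since $L$ is a clique and there is no edge from $A\setminus B$ to $B\setminus A$, we have $L \subseteq A$ or $L \subseteq B$; relabel so that $L \subseteq A$.

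\emph{Colouring $G[A]$.} $G[A]$ has fewer vertices than $G$, because $B \setminus A \neq \emptyset$. By \cref{lem:contraction}\ref{part:deletion}, $G[A]$ has no $L$-compatible dominating $K_5$-model. Minimality then gives that $G[A]$ is 4-colourable.

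\emph{Colouring the $B$-side graphs.} By \cref{lem:colouring}, it remains to show that both $G_1 \coloneqq G[B] + xy$ and $G_2 \coloneqq G[B]/xy$ are 4-colourable. Both arise from $G$ by contracting a connected subgraph containing $v_1$, so \cref{lem:contraction}\ref{part:contraction-1} applies.
\begin{itemize}
\item Since $G$ is 2-connected, every component of $G[A\setminus B]$ has a neighbour at both $x$ and $y$, so $G[A]$ is connected. Contracting $H \coloneqq G[A]$ (which contains $v_1$) turns $G$ into $G_2$.
\item If $v_1 \neq y$, set $H' \coloneqq G[(A\setminus B)\cup\set{x}]$. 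It is connected (each component of $G[A\setminus B]$ sees $x$) and contains $v_1$. Contracting $H'$ yields $G[B]$ together with the edge $xy$, because $y$ has a neighbour in $A \setminus B$. This is exactly $G_1$. If $v_1 = y$, use $G[(A\setminus B)\cup\set{y}]$ instead, by symmetry.
\end{itemize}
In both cases the contracted graph has fewer vertices than $G$, since $A\setminus B \neq \emptyset$. Let $L'$ be the induced ordered clique, using the position convention stated before \cref{lem:contraction}; its first vertex is the contracted vertex. By minimality, each of $G_1, G_2$ is either 4-colourable or has an $L'$-compatible dominating $K_5$-model. In the latter case \cref{lem:contraction}\ref{part:contraction-1} lifts the model to an $L$-compatible dominating $K_5$-model of $G$, a contradiction. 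Hence $G_1$ and $G_2$ are 4-colourable, and \cref{lem:colouring} shows $G$ is 4-colourable, the final contradiction.

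\emph{Main obstacle.} The delicate point is verifying that the contractions produce exactly $G[B]+xy$ and $G[B]/xy$, and that $L'$ is a legitimate ordered clique. If $v_2 \in B\setminus A$, then $L \subseteq A$ forces $v_2 \in \set{x,y}$, so $v_2$ still lies in the contracted graph. The condition in \cref{lem:contraction}\ref{part:contraction-1} only requires $H$ to be connected and to contain $v_1$. The 2-connectivity of $G$ supplies both connectivity facts used above: that $G[A]$ is connected, and that $y$ (respectively $x$) has a neighbour in $A\setminus B$.
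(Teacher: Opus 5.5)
Your proof is correct and follows the same route as the paper: a proper $2$-separation with $L\subseteq A$, then $4$-colouring $G[A]$ by minimality, then obtaining $G[B]+xy$ and $(G[B]+xy)/xy$ (which is what your $G[B]/xy$ should denote when $xy\notin E(G)$) by contracting connected subgraphs containing $v_1$, and finishing with \cref{lem:contraction}\ref{part:contraction-1} and \cref{lem:colouring}. The only difference is cosmetic: you show directly from $2$-connectivity that $G[A]-y$ and $G[A]$ are connected and contract each of them in one step, whereas the paper builds $H_1$ from the paths $P_x,P_y$, restricts to $B$, and then contracts $v_1^\ast y$.
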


    \begin{proofclaim}
        If not, then, since $\abs{V(G)} \geq 5$, there is a proper 2-separation $(A, B)$ of $G$. Since $L$ is a clique, we may and will assume that $L \subseteq A$. Let $\set{x, y} = A \cap B$. 
        Since $G$ does not have an $L$-compatible dominating $K_5$-model, \cref{lem:contraction}\ref{part:deletion} implies that $G[A]$ does not, and so minimality of $\abs{V(G)}$ implies that $G[A]$ is 4-colourable.

        Without loss of generality, $v_1 \neq y$ and $v_2 \neq x$. Since $G$ is 2-connected, $y$ does not separate $v_1$ from $B \setminus A$ and so there is a (possibly empty) path $P_x$ from $v_1$ to $x$ in $G[A]$ that avoids $y$. Similarly there is a path $P_y$ from $v_2$ to $y$ in $G[A]$. 
        The walk $v_1 v_2 \cup P_y$ shows that there is a path from $v_1$ to $y$ in $G[A]$. Thus there is a connected subgraph $H_1$ of $G[A]$ that contains $v_1$ and $x$, does not contain $y$, but is adjacent to $y$. Contract $E(H_1)$ to get a vertex $v_1^\ast$ which is adjacent to $y$. Since $x$ was in $H_1$, we view $v_1^\ast$ as being in $A \cap B$.

        The resulting graph, when induced on $B$, is isomorphic to $G' \coloneqq G[B] + xy$.
        Let $L'$ be what $L$ has now become intersected with $B$. Since $v_1 \in V(H_1)$, the first vertex of $L'$ is $v_1^\ast$.
        Further, in $G'$, contract the edge $v_1^\ast y$. Let $L''$ be what $L'$ has now become. The resulting graph, $G''$, is isomorphic to $(G[B] + xy)/xy$. 

        Since $G$ has no $L$-compatible dominating $K_5$-model and the contracted subgraphs were connected and contained $v_1$, \cref{lem:contraction}\ref{part:contraction-1} implies that $G'$ does not have an $L'$-compatible dominating $K_5$-model and $G''$ does not have an $L''$-compatible $K_5$-dominating model. The minimality of $\abs{V(G)}$ implies that $G'$ and $G''$ are both 4-colourable.

        Recall that $G[A]$ is 4-colourable. Thus \cref{lem:colouring} implies that $G$ is 4-colourable, which is the required contradiction.
    \end{proofclaim}

    \begin{claim}
        $G$ is 4-connected.
    \end{claim}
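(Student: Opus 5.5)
Suppose, for a contradiction, that $G$ is not 4-connected. Since $G$ is 3-connected with $\abs{V(G)}\ge 5$ and has minimum degree at least $4$, there is a proper $3$-separation $(A,B)$ with $A\cap B=\set{x,y,z}$. As $L=(v_1,v_2)$ is a clique, we may assume $L\subseteq A$. We choose the separation so that $B\setminus A$ is connected, for instance by taking $B\setminus A$ to be a single component of $G-\set{x,y,z}$ not containing $L\setminus\set{x,y,z}$. By $3$-connectivity, each of $x,y,z$ has a neighbour in $B\setminus A$ and a neighbour in $A\setminus B$. By \cref{lem:contraction}\ref{part:deletion} and the minimality of $\abs{V(G)}$, $G[A]$ is $4$-colourable.

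The plan is to apply \cref{lem:colouring} with $c=4$, exactly as in the proof that $G$ is 3-connected. Let $G'$ be $G[B]$ together with a triangle on $\set{x,y,z}$. We must show that $G'$, $G'/xy$ (and its symmetric versions) and $G'/\set{xy,yz}$ are all $4$-colourable. For each of these we want a minor $M$ of $G$ on fewer vertices, with an ordered clique $L_M$, such that $M$ contains the required graph as an induced subgraph and an $L_M$-compatible dominating $K_5$-model of $M$ lifts to an $L$-compatible one in $G$. Minimality then shows that $M$ is $4$-colourable, and hence so is the required graph.

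The allowed operations are deletion, \cref{lem:contraction}\ref{part:deletion}; contracting one connected set containing $v_1$, \cref{lem:contraction}\ref{part:contraction-1}; and contracting a connected set inside $N(v_1)$ containing $v_2$, \cref{lem:contraction}\ref{part:contraction-2}. Contracting an arbitrary connected set is not safe, because uncontracted vertices of a branch set $T_i$ with $i\geq 2$ might lose their neighbours in earlier branch sets.

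First I would handle the graphs in which at least one edge of the triangle is contracted. Using the connectivity of $G[A]$ together with the paths in $G[A]$ guaranteed by $3$-connectivity, I would take a connected subgraph $H_1\subseteq G[A]$ that contains $v_1$ and $x$, avoids $y$ and $z$, and is adjacent to both. This is the same construction as in the $2$-separation claim. Contracting $H_1$ and then restricting to $B$ produces $G[B]+xy+xz$, and further contracting $xy$ or $xyz$ stays within \cref{lem:contraction}\ref{part:contraction-1}. This yields $4$-colourings for the cases where $x$ is identified with $y$ or with $z$, and where all three are identified. For the case where only $y$ and $z$ are identified, I would instead grow $H_1$ to contain $y$ and $z$ but not $x$; such an $H_1$ exists since $G[A]-x$ is connected by $3$-connectivity.

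The main obstacle is the full triangle case, $G'$ itself. A single contracted blob containing $v_1$ produces only two of the three edges $xy,xz,yz$. I would try two routes to obtain the third. The first uses \cref{lem:contraction}\ref{part:contraction-2}: if $v_2\notin\set{x,y,z}$ and there is a path from $v_2$ to $y$ within $N(v_1)\cap A$ avoiding $H_1$, then contracting it adds the edge $yz$ provided $z$ is adjacent to it. This needs careful case analysis on the positions of $v_1,v_2$ relative to $\set{x,y,z}$. The second route, if the first fails, is to swap the roles of the sides and exploit the freedom to choose a new ordered clique. Contract $B\setminus A$ (connected, containing no vertex of $L$) together with $x$ into a vertex $w$, and apply the induction hypothesis to a minor on the $A$-side with a clique list beginning with $w$. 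A model in which $w$ has index at most $1$ then uncontracts safely. Alternatively, use the colourings already obtained, in which $y$ and $z$ may receive equal colours, and argue via the $4$-colouring of $G[A]$ that we may assume $y$ and $z$ receive different colours there. Making this compatible with $v_1$ having index at most $1$ is where I expect the real difficulty, and I would resolve it by distinguishing whether $v_1\in A\cap B$.
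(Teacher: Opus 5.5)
There is a genuine gap. You correctly reduce to \cref{lem:colouring} and correctly identify the obstacle, but the proposal never gets past it. The full-triangle graph $G[B]+E(K)$ is the heart of the claim, and neither route you sketch handles it.
\begin{itemize}
\item Route one applies \cref{lem:contraction}\ref{part:contraction-2} relative to $v_1$ in $G$ itself. As stated, it needs a $y$--$z$ connection inside $N_G(v_1)$, and in general no such connection exists.
\item Route two cannot give a contradiction. A model of an $A$-side minor compatible with a list starting at $w$ does lift to a dominating $K_5$-model of $G$. But nothing forces $\ind(v_1)\le 1$ and $\ind(v_2)\le 2$, and only an $L$-compatible model contradicts the choice of $(G,L)$.
\item You also cannot choose which partition of $\set{x,y,z}$ the 4-colouring of $G[A]$ induces. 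So the case where $x,y,z$ receive three distinct colours must be handled.
\end{itemize}
There is a smaller hole as well. If $v_1\in A\cap B$, say $v_1=x$, then the graph in which only $y$ and $z$ are identified cannot be produced by \cref{lem:contraction}\ref{part:contraction-1}, since every contracted blob must contain $v_1=x$. Your step ``grow $H_1$ to contain $y,z$ but not $x$'' silently assumes $v_1\notin\set{x,y,z}$.

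The missing idea is to apply \cref{lem:contraction}\ref{part:contraction-2} \emph{after} \cref{lem:contraction}\ref{part:contraction-1}, relative to the contracted vertex $v_1^\ast$. Its neighbourhood is all of $N(H)$, not just $N_G(v_1)$.

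Concretely, the paper proceeds as follows.
\begin{itemize}
\item It takes the 3-separation with $\abs{A}$ minimal, rather than with $B\setminus A$ connected.
\item It duplicates a vertex of $A\setminus B$ and applies Menger's theorem, obtaining in $G[A]$ a $v_1x$-path $P_x$ and a disjoint $yz$-path.
\item It picks the $yz$-path $P_{yz}$ to maximise the component $H$ of $G[A]-V(P_{yz})$ containing $P_x$, and subject to that to be shortest.
\item Rerouting and 3-connectivity show $H$ is the only component, so $A=V(H)\cup V(P_{yz})$.
\item Inducedness and minimum degree $4$ give every internal vertex of $P_{yz}$ a neighbour in $H$. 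Minimality of $\abs{A}$ gives the same for $y$ and $z$.
\end{itemize}
Now contract $H$ to $v_1^\ast$ using (a). Since $A=V(H)\cup V(P_{yz})$, the vertex $v_2$ lies either in $H$ or on $P_{yz}$. In the first case, \cref{lem:contraction}\ref{part:extension} lets you append a vertex of $P_{yz}$ as the new second list vertex. Either way, (b) lets you contract $P_{yz}$ down to an edge, and this yields exactly $G[B]+E(K)$.

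Every further contraction $G''/E''$ is then covered by (a) if some edge of $E''$ is incident to $v_1^\ast$, and by (b) if $E''=\set{yz}$. This works uniformly, whether or not $v_1\in A\cap B$.
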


    \begin{proofclaim}
        If not, then, since $\abs{V(G)} \geq 5$, there is a proper 3-separation $(A, B)$ of $G$ with $L \subseteq A$. Choose such a separation with $\abs{A}$ minimal.
        Let $\set{x, y, z} = A \cap B$. Since $G$ does not have an $L$-compatible dominating $K_5$-model neither does $G[A]$ and so, by minimality of $\abs{V(G)}$, $G[A]$ is 4-colourable.

        Since every vertex of $G$ has degree at least $4$, we have $\abs{A \setminus B} \geq 2$. Let $v \in A \setminus B$ be distinct from $v_1$. Duplicate the vertex $v$ to get a non-adjacent twin $v'$. Doing this does not change connectivity and so the resulting graph is 3-connected. By Menger's theorem, there are three vertex-disjoint paths between $\set{v_1, v, v'}$ and $\set{x, y, z}$. Without loss of generality, these are $P_x$ from $v_1$ to $x$, $P$ from $v$ to $y$, and $P'$ from $v'$ to $z$.
        All three paths must have all their vertices in $A\cup \set{v'}$, since $\set{x, y, z}$ separates $v_1, v, v'$ from $B \setminus A$. In particular, $P_x$ is a $v_1x$-path in $G[A]$, and in the original graph $G$, the paths $P$ and $P'$ correspond to paths in $G[A]$ from $v$ to $y$ and $v$ to $z$ that are vertex-disjoint from $P_x$. Thus there is a walk from $y$ to $z$ in $G[A]$ that is vertex-disjoint from $P_x$. In conclusion, $G[A]$ contains the $v_1x$-path $P_x$ and some $yz$-path that is vertex-disjoint from $P_x$.

        We now run a standard rerouting argument (see, for example, \cite[Lemma~2.1]{KNTWa}). Let $P_{yz}$ be a $yz$-path in $G[A]$ that is vertex-disjoint from $P_x$ and such that
        \begin{enumerate}[label=(\roman{*})]
            \item the number of vertices in the same component as $P_x$ in $G[A] - V(P_{yz})$ is maximised and, subject to this,
            \item $\abs{V(P_{yz})}$ is minimised.
        \end{enumerate}
        Replacing $P_{yz}$ by a strictly shorter path from $y$ to $z$ contained in $V(P_{yz})$ would not decrease the number of vertices counted in (i) and strictly decrease the number of vertices counted in (ii), a contradiction. Hence, $P_{yz}$ must be an induced path. Let $H$ be the component of $G[A] - V(P_{yz})$ that contains $P_x$. 
        We now show that $H$ is the only component of $G[A] - V(P_{yz})$.
        Let $y = a_0, a_1, \dots, a_r = z$ be distinct vertices of $P_{yz}$ appearing in that order on $P_{yz}$ such that $\set{a_1, \dots, a_{r - 1}}$ is exactly the set of internal vertices of $P_{yz}$ that have a neighbour in $H$. 
        
        Consider some component $C$ of $G[A] - V(P_{yz})$ that is not $H$ (there may be no such component).
        Further, suppose that $C$ has neighbours $b$ and $c$ on $P_{yz}$ such that there is some $a_i$ strictly between $b$ and $c$. Then rerouting $bP_{yz}c$ \footnote{We use \defn{$bP_{yz}c$} to denote the subpath of $P_{yz}$ with ends $b$ and $c$.} through $C$ increases the size of $H$ (it now contains $a_i$).
        This contradicts (i).
        Thus, for every component $C \neq H$ of $G[A] - V(P_{yz})$, there is some $i \in \set{0, \dots, r - 1}$ such that all neighbours of $C$ on $P_{yz}$ are in $a_iP_{yz}a_{i + 1}$. 
        
        For each $i \in \set{0, \dots, r - 1}$, let $\CC_i$ be the union of components of $G[A] - V(P_{yz})$ that are not equal to $H$ and whose neighbours on $P_{yz}$ are all in $a_iP_{yz}a_{i + 1}$. Every component of $G[A] - V(P_{yz})$, other than $H$, is contained in some $\CC_i$. Now, the neighbourhood, in $G[A]$, of a vertex of $\CC_i$ is contained in $\CC_i$ and $a_i P_{yz} a_{i + 1}$. This is also true for vertices of $P_{yz}$ strictly between $a_i$ and $a_{i + 1}$ (recall that $P_{yz}$ is induced and $\set{a_1, \dots, a_{r - 1}}$ is exactly the set of internal vertices of $P_{yz}$ that have a neighbour in $H$). Thus, in $G[A] - \set{a_i, a_{i + 1}}$, the component containing any vertex of $\CC_i$ is contained in the union of $\CC_i$ and the subpath of $P_{yz}$ strictly between $a_i$ and $a_{i + 1}$. Suppose some $\CC_i$ is non-empty. Then $\set{a_i, a_{i + 1}}$ separates $\CC_i$ from $\set{x, y, z}$ in $G[A]$ which implies that $\set{a_i, a_{i + 1}}$ separates $\CC_i$ from $\set{x, y, z}$ in $G$. This contradicts the 3-connectivity of $G$. Therefore, every $\CC_i$ is empty and so $H$ is indeed the only component of $G[A] - V(P_{yz})$.

        We now show that every vertex of $P_{yz}$ has a neighbour in $H$. If $a$ is an internal vertex of $P_{yz}$, then it has degree at least $4$ in $G$ and is not in $\set{x, y, z}$, so it has degree at least $4$ in $G[A]$. Since $P_{yz}$ is an induced path, $a$ has a neighbour in $G[A]$ that is not on $P_{yz}$ and so is in $H$. Next consider $y$ and suppose towards a contradiction that $y$ has no neighbours in $H$. Since $P_{yz}$ is induced, $y$ has only one neighbour in $P_{yz}$ and so only one neighbour in $G[A]$, which we denote by $y'$. Vertex $y$ is neither equal nor adjacent to $v_1 \in H$, so $y \notin L$.
        But now $(A \setminus \set{y}, B \cup \set{y'})$ is a proper 3-separation of $G$ (the intersection is $\set{x, y', z}$) with $L \subseteq A \setminus \set{y}$. This contradicts the minimality of $\abs{A}$. Therefore $y$ has a neighbour in $H$. Repeating the same argument for $z$, we find that also $z$, and hence indeed every vertex of the path $P_{yz}$, must have some neighbour in $H$.

        We now contract some edges of $G[A]$ to create a clique on $A \cap B$. First, in the graph $G$ contract all of $H$ into a single vertex $v_1^\ast$. Let $G'$ and $L'$ be what $G$ and $L$ have become. Since $G$ does not have an $L$-compatible dominating $K_5$-model and $v_1 \in V(H)$, \cref{lem:contraction}\ref{part:contraction-1} implies that $G'$ does not have an $L'$-compatible dominating $K_5$-model. Since $v_1 \in V(H)$, $v_1^\ast$ is the first vertex of $L'$. Now contract the edges of $P_{yz}$ until $y$ and $z$ are adjacent. Let $G''$ and $L''$ be what $G'$ and $L'$ have become. Since every vertex of $P_{yz}$ had a neighbour in $H$, we have $V(P_{yz}) \subseteq N_{G'}(v_1^\ast)$ and so \cref{lem:contraction}\ref{part:contraction-2} implies that $G''$ does not have an $L''$-compatible dominating $K_5$-model. Note that $G'' \cong G[B] + E(K)$ where $K$ is a clique on $\set{x, y, z}$.

        Let $E'' \subseteq E(K)$. Let $G''' \coloneqq G''/E''$ and $L'''$ be what $G''$ and $L''$ become after contracting the edges of $E''$. If $E'' = \emptyset$, then $G''' = G''$ does not have an $L''' = L''$-compatible dominating $K_5$-model and so is 4-colourable by minimality of $\abs{V(G)}$. 
        Otherwise, since $K$ is a triangle, the edges of $E''$ form a connected graph (which does not necessarily span $V(K)$). 
        If that subgraph contains $v_1^\ast$, then \cref{lem:contraction}\ref{part:contraction-1} implies that $G'''$ does not contain an $L'''$-compatible dominating $K_5$-model and so is 4-colourable by minimality of $\abs{V(G)}$. Otherwise $E'' = \set{yz}$. If neither $y$ nor $z$ is in $L''$ (this occurs if $v_2 \in H$), then $L'' = \set{v_1^\ast}$. Now add $y$ to $L''$: by \cref{lem:contraction}\ref{part:extension}, $G''$ still does not have an $L''$-compatible dominating $K_5$-model. Thus, we may and will assume that one of $y$ or $z$ is in $L''$. Then \cref{lem:contraction}\ref{part:contraction-2} implies that $G'''$ does not contain an $L'''$-compatible dominating $K_5$-model and so is 4-colourable by minimality of $\abs{V(G)}$.

        We have shown that, for every $E'' \subseteq E(K)$, $G'' \cong G[B] + E(K)$ satisfies $\chi(G''/E'') \leq 4$. Recall that $G[A]$ is 4-colourable. \Cref{lem:colouring} implies that $G$ is 4-colourable which is the required contradiction.
    \end{proofclaim}

    Now, if $G$ contains a triangle that is vertex-disjoint from $L$, then, by \cref{lem:4-connected-triangle}, either $G$ is planar (and so 4-colourable) or $G$ has an $L$-compatible dominating $K_5$-model, a contradiction. Thus $G - V(L)$ is triangle-free.

    Let $N = G[N_G(v_1)]$. Note that $v_2 \in V(N)$ and $N$ has at least four vertices so there is $x \in V(N) \setminus \set{v_2}$. Suppose that every vertex of $N - \set{v_2}$ has degree at least $3$ in $N$. Let $C$ be the component of $N$ containing $x$. Then $C$ is connected and has at least four vertices. Let $L' = V(C) \cap \set{v_2}$ so $L'$ is a clique on at most one vertex.  Also, every vertex of degree at most 2 in $C$ is in $L'$. \Cref{lem:dominating-K4} then implies that $C$ has a dominating $K_4$-model $(T_2, T_3, T_4, T_5)$ with $L' \subseteq T_2$. In particular, $v_2 \notin T_3 \cup T_4 \cup T_5$. But then $(\set{v_1}, T_2, T_3, T_4, T_5)$ is an $L$-compatible dominating $K_5$-model in $G$, a contradiction.

    Thus, there is some vertex $u \in V(N) \setminus \set{v_2}$ with at most two neighbours in $N$. Let $M$ be the set of neighbours of $u$ in $V(G) \setminus (\set{v_1} \cup N_G(v_1))$.  Since $G - V(L)$ is triangle-free, $M$ is an independent set. Since $M$ is disjoint from $N_G(v_1)$, $M \cup \set{v_1}$ is an independent set.
    Let $G'$ be obtained from $G$ by contracting the edges of $\set{v_1u} \cup \set{um \colon m \in M}$ (these span a connected subgraph of $G$) into a new vertex $v_1^\ast$. Let $L'$ be what $L$ has now become. By \cref{lem:contraction}\ref{part:contraction-1}, $G'$ does not have an $L'$-compatible dominating $K_5$-model and so, by minimality of $\abs{V(G)}$, the graph $G'$ has a 4-colouring $\chi$. We 4-colour $G$ as follows. Use colouring $\chi$ on $G - (\set{v_1, u} \cup M)$ and colour the vertices of $\set{v_1} \cup M$ with colour $\chi(v_1^\ast)$. It remains to colour $u$. Since $\abs{N_G(u) \setminus (\set{v_1} \cup M)} = \deg_N(u) \leq 2$, at most three colours appear on the neighbourhood of $u$ in $G$ and so there is a colour available for $u$. This gives a 4-colouring of $G$. This contradiction completes the proof.
\end{proof}

\subsection*{Acknowledgements}

Research partially completed at the workshop `\href{https://www.matrix-inst.org.au/events/global-structure-and-geometry-of-graphs/}{Global Structure and Geometry of Graphs}' at MATRIX in Creswick, Australia, April 2026.

{\fontsize{10pt}{11pt}\selectfont
%\bibliographystyle{DavidNatbibStyle}
%\bibliography{DavidBibliography}
\def\soft#1{\leavevmode\setbox0=\hbox{h}\dimen7=\ht0\advance \dimen7 by-1ex\relax\if t#1\relax\rlap{\raise.6\dimen7 \hbox{\kern.3ex\char'47}}#1\relax\else\if T#1\relax \rlap{\raise.5\dimen7\hbox{\kern1.3ex\char'47}}#1\relax \else\if d#1\relax\rlap{\raise.5\dimen7\hbox{\kern.9ex \char'47}}#1\relax\else\if D#1\relax\rlap{\raise.5\dimen7 \hbox{\kern1.4ex\char'47}}#1\relax\else\if l#1\relax \rlap{\raise.5\dimen7\hbox{\kern.4ex\char'47}}#1\relax \else\if L#1\relax\rlap{\raise.5\dimen7\hbox{\kern.7ex \char'47}}#1\relax\else\message{accent \string\soft \space #1 not defined!}#1\relax\fi\fi\fi\fi\fi\fi}

}
\end{document}